\documentclass[12pt]{amsart}
\usepackage{cases}
\usepackage{amscd}
\setlength{\textwidth}{6 in} \setlength{\textheight}{8.6 in}
\hoffset=-46pt
\usepackage{amsmath}
\usepackage{amsthm}
\usepackage{amssymb}
\usepackage{mathrsfs}
\usepackage{amsfonts}
\usepackage{color}

\usepackage{pifont}

\usepackage{upgreek}
\usepackage{bm}


\usepackage{indentfirst, latexsym, bm, amsmath, eufrak, amsthm}
\usepackage{amsmath}
\usepackage{amsthm}
\usepackage{amssymb}
\usepackage{mathrsfs}
\usepackage{amsfonts}

\usepackage{pifont}

\usepackage{upgreek}
\usepackage{bm}
\setlength{\parskip}{0pt}

\newcommand*{\tr}{\mathrm{tr}}
\numberwithin{equation}{section}
\newtheorem{theo}{Theorem} 

\newtheorem{lem}{Lemma}

\newtheorem{remark}{Remark}
\newtheorem{prop}{Proposition}
\newtheorem{definition}{Definition}


\begin{document}
\title{Totally Real Flat Minimal Surface in Hyperquadric}
\author[Ling He]{Ling He}
\author[Xiaoxiang Jiao]{Xiaoxiang Jiao}
\author[Mingyan Li]{Mingyan Li$^*$}

\address{Ling He:  Center for Applied Mathematics, Tianjin University, Tianjin
	300072, P. R. China}

\email{heling@tju.edu.cn}

\address{Xiaoxiang Jiao:  School of Mathematical Sciences, University of Chinese Academy of Sciences, Beijing
	100049, P. R. China}

\email{xxjiao@ucas.ac.cn}

\address{Mingyan Li (Corresponding author): School of Mathematics and Statistics, Zhengzhou University, Zhengzhou 450001, P. R. China}

\email{limyan@zzu.edu.cn}

\thanks{This work was supported by NSFC (Grant Nos. 11901534, 11871450, 11501548) and CSC Visiting Scholar Program}

\maketitle

\begin{abstract}
In this paper, we study geometry of totally real minimal  surfaces in the complex hyperquadric $Q_{N-2}$, and obtain some characterizations of the harmonic sequence generated by these minimal immersions. For totally real flat surfaces that are minimal in both $Q_{N-2}$ and $\mathbb{C}P^{N-1}$, we determine them for $N=4, 5, 6$, and give a classification theorem when they are Clifford solutions.
\end{abstract}

  \section{Introduction}

A beautiful and quite complete theory has been developed for the construction of harmonic maps (equivalently, minimal branched immersions) from the two-sphere $S^2$ to complex Grassmann manifold $G(k,N;\mathbb{C})$.  In \cite{[5], [30]}, the authors gave unique factorization theorems and obtained an explicit bijective parametrization of all harmonic maps of $S^2$ in $G(k,N;\mathbb{C})$ by cetain finite sequences of holomorphic maps from $S^2$ to complex Grassmannians together with an algorithm involving only algebraic operations and Cauchy integrations for finding the harmonic map corresponding to a given sequence of holomorphic maps. The same problem was also studied by Chern and Wolfson, using their \emph{crossing and recrossing} construction(\cite{[6], [31]}). When $k=1$, this reduces to the well-known parametrization of harmonic maps from $S^2$ to a complex projective space by a single holomorphic map. In 1986, Wolfson \cite{[14]} studied harmonic map $S^2 \rightarrow G(2,N;\mathbb{R})$, or the complex hyperquadric $Q_{N-2}$, he reduced the problem to finding a certain subset $S_0$ of the set $S$ of degenerate harmonic maps $S^2 \rightarrow G(2,N;\mathbb{R})$. Three years later, by certain flag transforms called \emph{forward and backward replacement}, 
Bahy-El-Dien and Wood \cite{[2]}  proved that  these harmonic maps can all  be obtained in a unique way from a holomorphic map.  With this result and the rigidity theorem of minimal immersion $S^2 \rightarrow  \mathbb{C}P^{N-1}$ with constant curvature in \cite{[3]}, a series of classification results about minimal two-spheres of constant Gauss curvature in $Q_{N-2}$ are obtained under certain conditions (cf. \cite{[7], [51], [60], [54], [16]}).

It is desirable to study harmonic maps from Riemann surfaces of higher genus. When $k=1$, i.e., $G(1,N;\mathbb{C})=\mathbb{C}P^{N-1}$, there is a family of totally real isometric harmonic maps from the flat complex plane $\mathbb{C}$ into $\mathbb{C}P^{N-1}$. None of these maps is pseudoholomorphic. This family was first described by Kenmotsu in \cite{[20]} and, then Bolton and Woodward in \cite{[18]}. In 1995, Jensen and Liao \cite{[19]} discovered continuous families of noncongruent flat minimal tori in $\mathbb{C}P^{N-1}$ by this family of minimal surfaces.

In this paper we generalize the result of \cite{[18], [20]}  to totally real minimal surfaces in $Q_{N-2}\subseteq \mathbb{C}P^{N-1}$ by theory of harmonic maps. Our strategy is to establish useful criteria for deciding whether or not a totally real flat harmonic map from $\mathbb{C}$ to $\mathbb{C}P^{n} (n \leq N-1)$ can be transformed into $Q_{N-2}$ via some unitary matrix in $U(N)$, where we use the standard method of matrix analysis, which is effective in such problems, and the harmonic sequence of a harmonic map  also play an important role.

Our paper is organized as follows.    In Section 3, as our starting point, we investigate totally real flat minimal surfaces in $Q_{N-2}$ by regarding them as harmonic maps in $G(2,N;\mathbb{R})$, and get some properties with respect to harmonic sequences. In Section 4, we analyze solutions of totally real flat immersion $f:\mathbb{C} \rightarrow Q_{N-2}\subseteq \mathbb{C}P^{N-1}$, which is minimal in both  $Q_{N-2}$ and $\mathbb{C}P^{N-1}$. Furthermore we give separate treatments of the cases $Q_{2}, Q_{3}$ and $Q_{4}$ because we find their understanding is basic for that of the general case. Finally in Section 5, moving on, we study and classify the interesting and important case that $f:\mathbb{C} \rightarrow Q_{N-2}\subseteq \mathbb{C}P^{N-1}$ is a Clifford solution, which is better understood than the general case.
    	
  \section{Geometry of surfaces in hyperquadric}

  For any $N=1, 2, ...$, let $\langle \cdot, \cdot\rangle$ denotes the standard Hermitian inner product on $\mathbb{C}^N$, which is defined by 
  $$\langle z, w\rangle=z_1\overline{w}_1+z_2\overline{w}_2+...+z_N\overline{w}_N,$$ where $z=(z_1, z_2,..., z_N)^{T}, w=(w_1, w_2,..., w_N)^{T} \in \mathbb{C}^N$ and $\overline{w}_i$ is the complex conjugation of $w_i, \ i=1,2, ..., N$.
  
  Complex hyperquadric space $Q_{N-2}:=\{[X]\in \mathbb{C}P^{N-1} |  \langle \overline{X},  X\rangle=0\}$  is a complex algebraic submanifold in  complex projective space $\mathbb{C}P^{N-1}$. 
  Considering  $G(2,N;\mathbb{C})$ as
  the set of Hermitian orthogonal projection from $\mathbb{C}^N$ onto
  a $2$-dimensional subspace in $\mathbb{C}^N$ with standard Riemannian matric, its fixed point set under complex conjugation is $G(2,N;\mathbb{R}):=\{\phi \in G(2,N;\mathbb{C})| \overline{\phi}=\phi\}$, which lies totally geodesically  in
$G(2,N;\mathbb{C})$. Then $Q_{N-2}$ and $G(2,N;\mathbb{R})$ can be identified by 
 $$Q_{N-2} \rightarrow G(2,N;\mathbb{R}),   \quad  \  [X] \mapsto
\frac{\sqrt{-1}}{2} \overline{\mathrm{X}} \oplus  \mathrm{X},$$ where $[X]\in Q_{N-2}$
and $\mathrm{X}$ is a homogeneous coordinate vector of $[X]$. It is clear that
the map  is one-to-one and onto, and it is an isometry (for more details see \cite{[14]}). Therefore in our paper, we treat $Q_{N-2}$ and $G(2,N;\mathbb{R})$ in the same way, and $Q_{N-2}$ carries the standard Riemannian metric of  $G(2,N;\mathbb{R})$.

 Let $M$ be a connected Riemannian surface with complex coordinate $(z, \overline{z})$.  
  Suppose  \begin{equation}\phi:M\rightarrow G(2,N;\mathbb{C})\label{eq:2.1}\end{equation}
   is a minimal immersion, which is equivalent to say that the image of $\phi$ is a conformal harmonic surface in $G(2,N;\mathbb{C})$.  We will also assume that all maps \eqref{eq:2.1} are \emph{linearly full} in the sense that the image is not contained in any hyperplane of $G(2,N;\mathbb{C})$.

 Treat  $\phi:
   M\rightarrow G(2,N;\mathbb{C})$  as a Hermitian orthogonal projection
   onto a $2$-dimensional subbundle $\underline{\phi}$ of the
   trivial bundle $\underline{\mathbb{C}}^N = M \times \mathbb{C}^N$
   given by setting the fibre $\underline{\phi}_x = \phi (x)$ for
   all $x\in M$.  Then $\underline{\phi}$ is called (a)
   \emph{harmonic ((sub-) bundle) } whenever $\phi$ is a harmonic
   map.  From  \cite{[5], [6]}, two harmonic sequences are derived as follows:
 $$\underline{\phi}=\underline{\phi}_{0}\stackrel{\partial{'}}
 	{\longrightarrow }\underline{\phi}_1\stackrel{\partial{'}}
 	{\longrightarrow}\cdots\stackrel{\partial{'}}
 	{\longrightarrow}\underline{\phi}_ {i}\stackrel{\partial{'}}
 	{\longrightarrow}\cdots,$$
$$\underline{\phi}=\underline{\phi}_{0}\stackrel{\partial{''}}
 	{\longrightarrow}\underline{\phi}_{-1}\stackrel{\partial{''}}
 	{\longrightarrow}\cdots\stackrel{\partial{''}} {\longrightarrow}
 	\underline{\phi}_{-i}\stackrel{\partial{''}}{\longrightarrow}\cdots,
 	$$ where $\underline{\phi}_{i} = \partial ^\prime
 \underline{\phi} _{i-1}$ and $\underline{\phi}_{-i}=
 \partial ^{ \prime \prime} \underline{\phi} _{-i+1} $ are Hermitian
 orthogonal projections from $M \times \mathbb{C} ^N$ onto
 ${\underline{Im}}\left(\phi^{\perp}_ {i-1}\partial \phi_{i-1}
 \right)$ and ${\underline{Im}}\left(\phi^{\perp}_
 {-i+1}\overline{\partial} \phi_ {-i+1}\right)$ respectively, $i=1,2,\ldots$.
 This is well defined as long as $\phi_ {i-1}$ (resp. $\phi_ {-i+1}$) is not identically zero, in which case the process stops and anti-holomorphic   (resp. holomorphic) curve occurs. In this paper we exclude this case, which means that the harmonic sequence will always extend infinitely in both directions.

   Denote
   \begin{equation}\partial=\frac{\partial}{\partial z}, \ \overline{\partial}=\frac{\partial}{\partial \overline{z}}, \ A_z=(2\phi-I)\partial\phi, \ A_{\overline{z}}=(2\phi-I)\overline{\partial}\phi.\label{eq:2.14}\end{equation} The metric induced by $\phi$ on $M$ is locally given by
   $$ds^2=-\tr A_zA_{\overline{z}}dzd\overline{z},$$ 
   and its second fundamental form $B$ is as follows 
   $$\|B\|^2 = 4\textrm{tr}PP^{\ast},$$
   where $P=\partial\left(\frac{A_z}{4\lambda^2}\right), \
   P^{\ast} =- \overline{\partial}
   \left(\frac{A_{\overline{z}}}{4\lambda^2}\right)$ with
   $4\lambda^2=-\tr A_zA_{\overline{z}}$ (cf. \cite{[30]}).
  Set 
    $$s:\phi-{\phi}^{\bot}=2\phi-I,$$ which is a map from $M$ into the unitary group $U(N)$. It is well-known that $\phi$ is harmonic if and only if $s$ is harmonic.
   To analyze $\phi$, we need one of Uhlenbeck's results (cf. \cite{[17]}) as follows:
   \begin{lem}
   	Let $s: M\rightarrow U(N)$ be a smooth map. Then $s$ is a harmonic map if and only if it satisfies the following equation
   	\begin{equation}\overline{\partial}A_z=[A_z, A_{\overline{z}}], \label{eq:2.2}\end{equation}
   	where $A_z=\frac{1}{2}s^{-1}\partial s, A_{\overline{z}}=\frac{1}{2}s^{-1}\overline{\partial} s$.
   \end{lem}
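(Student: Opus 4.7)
The proof is the classical argument of Uhlenbeck. The strategy is to combine two pieces of information about the $\mathfrak{u}(N)$-valued one-form $\alpha := s^{-1}ds = 2A_z\,dz + 2A_{\overline{z}}\,d\overline{z}$: first, the Maurer--Cartan structure identity, which holds for any smooth $s$; second, the Euler--Lagrange equation for the harmonic energy, which is exactly the condition of harmonicity. The stated equation will emerge as the sum of these two.

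First, since $\alpha$ is the pullback by $s$ of the left-invariant Maurer--Cartan form on $U(N)$, it identically satisfies $d\alpha + \alpha\wedge\alpha = 0$. Expanding in the local holomorphic coordinate $(z,\overline{z})$ and extracting the coefficient of $dz\wedge d\overline{z}$ yields the tautological identity
\[
\overline{\partial} A_z - \partial A_{\overline{z}} = 2[A_z, A_{\overline{z}}],
\]
valid with no assumption on $s$. This is a routine computation using $\partial(s^{-1}) = -s^{-1}(\partial s)s^{-1}$ and can equivalently be verified by direct differentiation of the definitions of $A_z$ and $A_{\overline{z}}$.

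Second, I would derive the harmonic map equation in conservation-law form. Equipping $U(N)$ with the bi-invariant metric associated to $\langle X, Y\rangle = -\tr(XY)$ on $\mathfrak{u}(N)$, the energy of $s$ equals, up to a positive constant, $\int_M -\tr(A_z A_{\overline{z}})\,\sqrt{-1}\,dz\wedge d\overline{z}$. Performing the first variation $s_t = s\exp(tV)$ with $V:M\to \mathfrak{u}(N)$ compactly supported, and integrating by parts using the $\mathrm{ad}$-invariance of the trace pairing to kill the bracket contribution, one obtains the Euler--Lagrange equation $d\ast\alpha = 0$, which in conformal coordinates reads
\[
\overline{\partial} A_z + \partial A_{\overline{z}} = 0.
\]
This step is the main technical obstacle: the key input is that the Levi-Civita connection of the bi-invariant metric satisfies $\nabla_X Y = \tfrac{1}{2}[X,Y]$ on left-invariant vector fields, so the tension field of $s$ gets identified with the codifferential of $\alpha$; some care is needed to keep track of the factor $\tfrac{1}{2}$ in the definition of $A_z, A_{\overline{z}}$ and to verify that the bracket term drops out by ad-invariance.

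Finally, adding the two displayed equations gives $2\,\overline{\partial}A_z = 2[A_z, A_{\overline{z}}]$, i.e.\ the claimed identity. Conversely, assuming $\overline{\partial}A_z = [A_z, A_{\overline{z}}]$ and subtracting the Maurer--Cartan identity forces $\partial A_{\overline{z}} = -[A_z, A_{\overline{z}}]$, whence the conservation law $\overline{\partial}A_z + \partial A_{\overline{z}} = 0$ holds and $s$ is harmonic. This closes the equivalence.
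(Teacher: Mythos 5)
Your proposal is correct, and it is precisely the classical argument of Uhlenbeck that the paper itself invokes without proof (Lemma 1 is simply quoted from \cite{[17]}): the identity $\overline{\partial}A_z=[A_z,A_{\overline z}]$ is the sum of the Maurer--Cartan identity $\overline{\partial}A_z-\partial A_{\overline z}=2[A_z,A_{\overline z}]$, which holds for any smooth $s$, and the conservation law $\overline{\partial}A_z+\partial A_{\overline z}=0$, which is equivalent to harmonicity for the bi-invariant metric; the converse follows by subtracting the Maurer--Cartan identity, exactly as you say. Your sign bookkeeping (including the factors of $\tfrac12$ in $A_z,A_{\overline z}$) and the remark that the bracket term in the first variation dies by $\mathrm{ad}$-invariance of the trace pairing are both accurate, so there is nothing to correct.
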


\begin{remark}
In Lemma 1, when $s=2\phi-I$,  we have $\frac{1}{2}s^{-1}\partial s=(2\phi-I)\partial\phi$ and $\frac{1}{2}s^{-1}\overline{\partial} s=(2\phi-I)\overline{\partial}\phi$. Thus in the absence of confusion, we still denote them by $A_z$ and $A_{\overline{z}}$ respectively.
\end{remark}

  Given any minimal immersion  $\psi: M \rightarrow Q_{N-2}$, suppose  its corresponding map  of $M$ in $G(2,N;\mathbb{R})$  is $\phi$. Then there exists unordered pair of local sections of $\phi$, $\{\overline{X}, X\}$ such that $\psi=[X]$ and
  $\phi=\overline{X}\oplus X$. 
Without loss of generality, we write $|X|=1$.  Define 
  \begin{equation} 
  \xi:=\partial X- \langle \partial X, X\rangle X, \quad \eta:=\overline{\partial} X- \langle \overline{\partial} X, X\rangle X.\label{eq:2.3}
  \end{equation}
  The metric condition gives
  \begin{equation}\langle \xi, \eta\rangle=\langle \xi, X\rangle=\langle \eta, X\rangle=0,\label{eq:2.4}
  \end{equation}
  and \begin{equation} |\xi|^2+|\eta|^2=2\lambda^2.\label{eq:2.5}
  \end{equation}
 Notice that $\phi$ can be written in the form $\phi=\overline{X}(\overline{X})^*+XX^*$, by \eqref{eq:2.14} and \eqref{eq:2.3} we derive
\begin{equation}
A_z=X\eta^*+\overline{X}(\overline{\xi})^*-\xi X^*-\overline{\eta}(\overline{X})^*, A_{\overline{z}}=X\xi^*+\overline{X}(\overline{\eta})^*-\eta X^*-\overline{\xi}(\overline{X})^*.\label{eq:3.31}\end{equation} 
Combining it with Lemma 1, we have

\begin{prop}
	A smooth map $\phi:M\rightarrow G(2,N;\mathbb{R})$ is harmonic if and only if 
	\begin{equation}\overline{\partial} \xi =-|\xi|^2X-\langle \xi, \overline{\eta}\rangle\overline{X}+\langle \overline{\partial}X, X\rangle\xi, \quad \partial\eta=-|\eta|^2X-\langle \xi, \overline{\eta}\rangle\overline{X}+\langle \partial X, X\rangle\xi \label{eq:3.4}\end{equation}
	hold.
\end{prop}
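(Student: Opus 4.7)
The plan is to translate Lemma~1 (applied to $s=2\phi-I$) into the pair of identities \eqref{eq:3.4}. By that lemma, $\phi$ is harmonic if and only if the matrix identity $\overline\partial A_z = [A_z, A_{\overline z}]$ holds, and \eqref{eq:3.31} already gives $A_z$ and $A_{\overline z}$ explicitly as rank-at-most-four matrices in $X, \overline X, \xi, \eta$. The proof thus reduces to substitution, simplification using the orthogonality relations, and reading off components.

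First I would expand $\overline\partial A_z$ via the Leibniz rule. Since \eqref{eq:2.3} yields $\partial X = \xi + \langle \partial X, X\rangle X$ and $\overline\partial X = \eta + \langle \overline\partial X, X\rangle X$, every first-order derivative of $X$ or $\overline X$ can be reduced to $\xi, \eta, \overline\xi, \overline\eta$ plus multiples of $X, \overline X$, so that the only genuinely new quantities appearing in the expansion are $\overline\partial\xi$, $\partial\eta$ and their conjugates. Second, I would expand $[A_z, A_{\overline z}]$. Of the sixteen rank-one products in the commutator, most inner-product contractions vanish thanks to $|X|^2=1$, the hyperquadric condition $\langle X, \overline X\rangle=0$ (from $[X]\in Q_{N-2}$), and \eqref{eq:2.4}; the only scalars that survive are $|\xi|^2$, $|\eta|^2$ and $\langle \xi, \overline\eta\rangle$ together with their conjugates.

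To compare sides, I would apply the matrix identity to the vector $X$. The left side is $(\overline\partial A_z)X = \overline\partial(A_zX) - A_z(\overline\partial X)$, and $A_z X$ collapses via the orthogonality relations; the right side $[A_z, A_{\overline z}]X$ is evaluated in the same spirit. Setting them equal and isolating the $\xi$-component produces the first equation in \eqref{eq:3.4}; the second, for $\partial\eta$, is obtained in the conjugate fashion, either by applying the matrix identity to $\overline X$ or by interchanging the roles of $\partial$ and $\overline\partial$. Because both sides of the matrix identity act trivially on the orthogonal complement of $\mathrm{span}\{X, \overline X, \xi, \eta, \overline\xi, \overline\eta\}$ and are determined by their action on this subspace, the pair of vector identities in \eqref{eq:3.4} is in fact equivalent to the full matrix identity, so the converse implication comes for free.

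The main obstacle is not conceptual but combinatorial: each of many bilinear cross-terms must either be evaluated or shown to cancel. For this the differentiated orthogonalities $\overline\partial\langle \xi, X\rangle = 0$, $\overline\partial\langle \xi, \eta\rangle = 0$, $\overline\partial\langle X, \overline X\rangle = 0$, and $\overline\partial|X|^2 = 0$ are what force the coefficients $\langle \partial X, X\rangle$ and $\langle \overline\partial X, X\rangle$ appearing in \eqref{eq:3.4} to take their precise form: they allow one to replace quantities like $\langle \overline\partial\xi, X\rangle$ by closed-form expressions in $\langle \xi, \overline\partial X\rangle$ and then in these scalar data. Once these substitutions are in place, the coefficients of $X$, $\overline X$ and $\xi$ on the two sides match to give exactly \eqref{eq:3.4}.
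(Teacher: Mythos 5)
Your strategy is the same as the paper's: the paper's entire proof of this proposition is the remark that \eqref{eq:3.31} combined with Lemma~1 gives \eqref{eq:3.4}, i.e.\ exactly the substitution-and-read-off computation you describe. Your forward direction is sound and efficiently organized: since $A_zX=-\xi$, $A_z\eta=|\eta|^2X+\langle\xi,\overline{\eta}\rangle\overline{X}$ and $[A_z,A_{\overline z}]X=(|\xi|^2-|\eta|^2)X$, applying $\overline{\partial}A_z=[A_z,A_{\overline z}]$ to $X$ yields the first identity in \eqref{eq:3.4} at once, and applying it to $\overline{X}$ (then conjugating) yields the second. Carrying that out would also show that the last term of the second identity is $\langle\partial X,X\rangle\eta$ rather than $\langle\partial X,X\rangle\xi$ --- a typo in the paper's \eqref{eq:3.4}, consistent with the formula the paper actually uses in the proof of Proposition~2.

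The one step that does not work as you state it is the converse. You argue it ``comes for free'' because both sides of the matrix identity kill the orthocomplement of $V=\mathrm{span}\{X,\overline{X},\xi,\eta,\overline{\xi},\overline{\eta}\}$ and are determined by their action on $V$. But \eqref{eq:3.4} only encodes the matrix identity evaluated on the two vectors $X,\overline{X}$, not on all of $V$; knowing a linear map on a $2$-dimensional subspace of a $6$-dimensional spanning set does not determine it. To close the converse you must separately verify the identity on $\xi,\eta,\overline{\xi},\overline{\eta}$ --- this does hold (e.g.\ both sides send $\xi$ to $-|\xi|^2\xi+\langle\xi,\overline{\xi}\rangle\overline{\xi}-\langle\xi,\overline{\eta}\rangle\overline{\eta}$, using \eqref{eq:3.4}, \eqref{eq:2.4} and $\overline{\langle\partial X,X\rangle}=-\langle\overline{\partial}X,X\rangle$), but it is a genuine additional computation, not automatic. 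Likewise, the vanishing of $\overline{\partial}A_z$ on $V^{\perp}$ is not a priori: it holds precisely because \eqref{eq:3.4} forces $\overline{\partial}\xi$ and $\partial\eta$ into $\mathrm{span}\{X,\overline{X},\xi,\eta\}$ (in the forward direction, testing against $V^{\perp}$ is what proves this containment). With these two checks added, your argument is complete.
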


Using it, we immediately get
\begin{prop}
	Let $\psi: M\rightarrow Q_{N-2}$ be a minimal immersion. Then $\psi$ is minimal in $\mathbb{C}P^{N-1}$ if and only if 
	$\langle \xi, \overline{\eta}\rangle=0$.
\end{prop}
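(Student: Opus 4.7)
\textit{Proof proposal.} The plan is to compare the formula for $\overline{\partial}\xi$ furnished by Proposition~1 against the harmonic map equation for $\psi$ viewed as a map into $\mathbb{C}P^{N-1}$, and read off a single scalar condition. First I would note that since the K\"ahler metric on $Q_{N-2}$ is the one induced from $\mathbb{C}P^{N-1}$, conformality of $\psi$ is the same requirement in either ambient, so ``$\psi$ is minimal in $\mathbb{C}P^{N-1}$'' is equivalent to ``$\psi$ is harmonic as a map into $\mathbb{C}P^{N-1}$''. For a unit lift $X$, the classical criterion (see \cite{[5]}, or obtainable directly from Lemma~1 applied to $s=2XX^{*}-I$) asserts that $\psi=[X]$ is harmonic into $\mathbb{C}P^{N-1}$ precisely when $\overline{\partial}\xi\in\mathrm{span}_{\mathbb{C}}\{X,\xi\}$, i.e.\ $\mathbb{C}X\oplus\mathbb{C}\xi$ is $\overline{\partial}$-invariant.

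Given this criterion, the argument is a direct inspection. By Proposition~1, the minimality of $\psi$ in $Q_{N-2}$ yields
$\overline{\partial}\xi=-|\xi|^{2}X-\langle\xi,\overline{\eta}\rangle\overline{X}+\langle\overline{\partial}X,X\rangle\xi$,
a $\mathbb{C}$-linear combination of $X$, $\overline{X}$, and $\xi$. I would then verify that these three vectors are linearly independent wherever $\xi\ne 0$ by checking pairwise Hermitian orthogonality: $\langle\xi,X\rangle=0$ is built into the definition of $\xi$; $\langle X,\overline{X}\rangle=\overline{\langle\overline{X},X\rangle}=0$ is the defining equation of $Q_{N-2}$; and $\langle\xi,\overline{X}\rangle=\sum_{i}\xi_{i}X_{i}=\tfrac{1}{2}\partial\sum_{i}X_{i}^{2}-\langle\partial X,X\rangle\sum_{i}X_{i}^{2}=0$, since $\sum_{i}X_{i}^{2}\equiv 0$ on $Q_{N-2}$. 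It follows that $\overline{\partial}\xi\in\mathrm{span}\{X,\xi\}$ iff the coefficient of $\overline{X}$ vanishes, iff $\langle\xi,\overline{\eta}\rangle=0$; both implications of the proposition then follow at once.

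The step I expect to be the main obstacle is justifying the harmonicity criterion $\overline{\partial}\xi\in\mathrm{span}\{X,\xi\}$ in a self-contained way. Either one invokes the Eells--Wood harmonic-sequence theory of \cite{[5]}, or one derives it from Lemma~1 by a short calculation: with $s=2XX^{*}-I$ one obtains $A_{z}=-\xi X^{*}+X\eta^{*}$ and $A_{\overline{z}}=-\eta X^{*}+X\xi^{*}$, and multiplying $\overline{\partial}A_{z}=[A_{z},A_{\overline{z}}]$ on the right by $X$ (using $\langle\xi,X\rangle=\langle\eta,X\rangle=0$) collapses the resulting matrix identity to precisely the claimed scalar containment. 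Once the criterion is available, the remainder is the routine linear-independence verification above.
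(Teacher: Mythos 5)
Your proof is correct and follows essentially the same route as the paper's: both apply Lemma 1 to the line bundle (via $s=2XX^{*}-I$) to characterize harmonicity of $\psi$ into $\mathbb{C}P^{N-1}$ by an equation for $\overline{\partial}\xi$, and then compare with \eqref{eq:3.4} to read off the vanishing of the $\overline{X}$-coefficient, i.e.\ $\langle\xi,\overline{\eta}\rangle=0$. Your extra verification that $\langle\xi,\overline{X}\rangle=0$ (so that this coefficient is genuinely detected) merely makes explicit a step the paper leaves implicit.
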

\begin{proof}
	Using Lemma 1,  $\psi$ is harmonic from $M$ to $\mathbb{C}P^{N-1}$  if and only if 
	$$\overline{\partial} \xi =-|\xi|^2X+\langle \overline{\partial}X, X\rangle\xi, \quad \partial\eta=-|\eta|^2X+\langle \partial X, X\rangle\eta.$$ Thus we get  $\langle \xi, \overline{\eta}\rangle=0$ from \eqref{eq:3.4}, which completes the proof.
\end{proof}

Write 
  \begin{equation}a:=\frac{|\xi|^2}{\lambda^2}, \quad b:=\frac{|\eta|^2}{\lambda^2}.\label{eq:2.6}\end{equation}
  It is easy to check that $a$ and $b$ are independent of the choice of local lift $X$ and  complex coordinate $z$. Since $0\leq a,b \leq 2$, we can define globally an invariant $\theta:M\rightarrow [0, \pi]$ as follows
  \begin{equation}\theta:=2\arccos(\sqrt{\frac{a}{2}}),\label{eq:2.7}\end{equation}
  which is exactly the \emph{K$\ddot{a}$hler angle} of $\psi$ (cf. \cite{[15]}). Map $\psi$ is called \emph{holomorphic} (resp. \emph{anti-holomorphic, totally real}) if $\theta=0$ (resp. $\pi, \frac{\pi}{2}$).

  Here we introduce two lacal invariants:
  \begin{equation}\Phi:=\frac{\langle \overline{\partial}\xi, \eta\rangle}{\lambda^2} dz,\label{eq:2.8}\end{equation}
  \begin{equation}\Psi:=\langle \partial\xi, \eta\rangle dz^3.\label{eq:2.9}\end{equation}
  It is easy to check that $\Phi$ and $\Psi$ are independent of the choice of local section $X$ and complex coordinate $z$,  i.e., both of them are globally defined on $M$, and we call $\Psi$ the \emph{cubic Hopf differential}.

  Let 
  \begin{equation}S=\left(\begin{array}{cccccccccc} \langle \partial X, X\rangle & 0  \\
  		0 & \langle X, \overline{\partial} X\rangle
  	\end{array}\right). \label{eq:2.10}\end{equation} 
  Then we have
  \begin{equation}\partial(X, \overline{X})=(X, \overline{X}) S+(\xi, \overline{\eta})\label{eq:2.11}\end{equation}
 and 
  \begin{equation}\overline{\partial}(X,
  		\overline{X})=-(X,
  		\overline{X})S^*+(\eta,
  		\overline{\xi}). \label{eq:2.12}\end{equation} 
  From \eqref{eq:2.4}, \eqref{eq:2.11}, \eqref{eq:2.12} and the identity $\partial\overline{\partial}=\overline{\partial}\partial$, we get
  \begin{equation}\overline{\partial} S +\partial S^*-[S, S^*]=\left(\begin{array}{cccccccccc} |\xi|^2-|\eta|^2 & 0  \\
  		0 & |\eta|^2-|\xi|^2
  	\end{array}\right).\label{eq:2.13}\end{equation}

 \section{Totally real flat minimal surfaces in hyperquadric}  
Suppose now that $(M, ds^2)$ is a connected, simply connected domain in complex plane $\mathbb{C}$ with flat metric $ds^2=4dzd\overline{z}$. 
Let $\psi$ be a linearly full totally real
flat minimal immersion from $M$ to $Q_{N-2}$, and $\phi$ is the corresponding map in $G(2,N;\mathbb{R})$. In the following, we say $\phi$ is \emph{totally real} if $\psi$ is totally real.
Using \eqref{eq:2.5} \eqref{eq:2.6} and \eqref{eq:2.7}, condition totally real is equivalent to 
 \begin{equation}|\xi|^2=1=|\eta|^2. \label{eq:3.1}\end{equation} 
From it we obtain following lemma.

 \begin{lem}
  	Suppose $\phi: M\rightarrow G(2,N;\mathbb{R})$ is a totally real immersion. Then locally, there exists some local section $X$ of $\phi$ s.t. $\langle dX, \overline{X}\rangle=0$.
  \end{lem}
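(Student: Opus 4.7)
I would prove the lemma by exhibiting a gauge choice: start from an arbitrary unit null section $Y$ of the line bundle underlying $\psi=[X]$, and modify it by a phase $e^{i\theta}$ to eliminate the complex $1$-form $\omega := \langle dY,Y\rangle$. (Note that $\langle dY,\overline{Y}\rangle$ vanishes automatically, since differentiating the null condition $\langle Y,\overline{Y}\rangle \equiv 0$ on $Q_{N-2}$ gives $\langle dY,\overline{Y}\rangle = \tfrac12 d\langle Y,\overline{Y}\rangle = 0$; I therefore read the lemma as the gauge-theoretic statement that $X$ can be chosen with $\langle dX,X\rangle = 0$.) Under $X = e^{i\theta}Y$ with $\theta$ real, a direct check shows $\langle dX,X\rangle = \omega + i\,d\theta$, so the problem reduces to writing $\omega = -i\,d\theta$ for a real function $\theta$.

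Differentiating $|Y|^{2} = 1$ gives $\omega + \overline{\omega} = 0$, so $\omega = i\beta$ with $\beta$ a real $1$-form, and what remains to be shown is $d\beta = 0$ — then, since $M$ is simply connected, $\beta = d\theta$ locally.

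A direct calculation yields
\begin{equation*}
d\omega \;=\; -\sum_{j} dY_{j}\wedge d\overline{Y_{j}} \;=\; \bigl(|\overline{\partial}Y|^{2} - |\partial Y|^{2}\bigr)\,dz\wedge d\overline{z}.
\end{equation*}
Decomposing $\partial Y = \xi + \langle \partial Y,Y\rangle Y$ and $\overline{\partial}Y = \eta + \langle \overline{\partial}Y,Y\rangle Y$ via \eqref{eq:2.3}, and using $\xi,\eta \perp Y$ together with the identity $|\langle\partial Y,Y\rangle| = |\langle\overline{\partial}Y,Y\rangle|$ (obtained from differentiating $|Y|^{2}=1$), Pythagoras gives $|\partial Y|^{2} - |\overline{\partial}Y|^{2} = |\xi|^{2} - |\eta|^{2}$, which vanishes by the totally real hypothesis \eqref{eq:3.1}. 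Hence $d\omega = 0$.

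Setting $X := e^{-i\theta}Y$ with $\theta$ a real primitive of $\beta$ then yields a unit null lift of $\psi$ for which $\langle dX,X\rangle = i\beta - i\,d\theta = 0$. The only genuinely computational step is the Pythagorean split together with the identity $|\langle\partial Y,Y\rangle| = |\langle\overline{\partial}Y,Y\rangle|$; once that turns $d\omega$ into $(|\xi|^{2}-|\eta|^{2})\,dz\wedge d\overline{z}$, the totally real assumption enters at precisely the one point where it can, and completes the argument.
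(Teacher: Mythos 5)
Your proof is correct and is essentially the paper's argument: both exploit the phase ($SO(2)$) gauge freedom $X=e^{i\theta}Y$ and reduce the lemma to the closedness of the real $1$-form $\beta=-\sqrt{-1}\,\langle dY,Y\rangle$, which you verify by the direct computation $d\omega=-\sum_j dY_j\wedge d\overline{Y_j}$ plus Pythagoras, while the paper verifies it by noting that the integrability condition of $dT+T(Sdz-S^*d\overline{z})=0$ is the structure equation \eqref{eq:3.2} --- the same fact in matrix form, with the totally real hypothesis $|\xi|^2=|\eta|^2$ entering at the same single point. Your preliminary remark is also correct and clarifying: the condition $\langle dX,\overline{X}\rangle=0$ as literally written is automatic from $\langle\overline{X},X\rangle\equiv 0$, and the nontrivial condition that the paper's proof actually produces and later uses (to pass from Proposition 1 to \eqref{eq:3.5}) is $\langle dX,X\rangle=0$.
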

  \begin{proof}
  	Let $X$ be any given local section of  $\phi$  satisfies $|X|=1, \ \langle \overline{X}, X\rangle=0$. From the fact that $\phi$ is totally real,  \eqref{eq:2.13}  becomes
  	\begin{equation}\overline{\partial} S +\partial S^*-[S, S^*]=0.\label{eq:3.2}\end{equation}
  	Let $\widetilde{X}$ be  another local section  of  $\phi$ defined by$$(\widetilde{X},
  	\overline{\widetilde{X}})=(X,
  	\overline{X})T$$ where $T\in SO(2)$ is to be determined. To end our proof, we need to prove the existence of some $T\in SO(2)$ satisfies  $\langle \overline{\widetilde{X}}, d\widetilde{X}\rangle=0$.
  	Notice that such $T$ is a solution of the linear PDE
  $$dT+T(Sdz-S^*d\overline{z})=0. $$
  	Its integrable condition is just \eqref{eq:3.2}, so it has a unique solution for any given initial value, which ends our proof.
  \end{proof}

For totally real flat minimal immersion $\phi:M\rightarrow G(2,N;\mathbb{R})$, Proposition 1 together with Lemma 2 implies that, locally, there exists some local section $X$ of $\phi$ s.t. 
  \begin{equation}\overline{\partial} \xi =-X-\langle \xi, \overline{\eta}\rangle\overline{X}=\partial\eta. \label{eq:3.5}\end{equation}
 With it, direct computations give
$$4P=X(\overline{\partial}\eta)^*+\overline{X}(\overline{\partial\xi})^*-\partial\xi X^*-\partial\overline{\eta}(\overline{X})^*, \ 
4P^*=\overline{\partial}\eta X^*+\overline{\partial\xi}(\overline{X})^*- X(\partial\xi)^*-\overline{X}(\partial\overline{\eta})^*.$$
Therefore
$$\|B\|^2=4\tr PP^*=\frac{1}{2}(|\partial\xi|^2+|\overline{\partial}\eta|^2-|\langle \partial\xi, \overline{X}  \rangle|^2-|\langle\overline{\partial}\eta, \overline{X}  \rangle|^2),$$
which can be rewritten as 
\begin{equation}\|B\|^2=\frac{1}{2}(|\pi_{\phi^{\perp}}\partial\xi|^2+|\pi_{\phi^{\perp}}\overline{\partial}\eta|^2)\label{eq:3.32}\end{equation}
by noticing
$$\partial\xi=\pi_{\phi}\partial\xi+\pi_{\phi^{\perp}}\partial\xi=\langle\partial\xi, \overline{X}\rangle\overline{X}+\pi_{\phi^{\perp}}\partial\xi$$ and 
$$\overline{\partial}\eta=\pi_{\phi}\overline{\partial}\eta+\pi_{\phi^{\perp}}\overline{\partial}\eta=\langle\overline{\partial}\eta, \overline{X}\rangle\overline{X}+\pi_{\phi^{\perp}}\overline{\partial}\eta,$$
where $\pi_{\phi}\partial\xi$ and $\pi_{\phi^{\perp}}\partial\xi$ denote the Hermitian orthogonal projections onto $2$-dimensional subbundle $\underline{\phi}$ and $(N-2)$-subbundle $\underline{\phi}^{\perp}$ of the trivial bundel $\underline{\mathbb{C}}^N=M\times \mathbb{C}^N$ respectively.
  Using \eqref{eq:3.32} we obtain

\begin{prop}
	Let $\phi: M\rightarrow G(2,N;\mathbb{R})$ be a linearly full totally real flat minimal immersion. If $\phi$ is totally geodesic, then $N=4$.
\end{prop}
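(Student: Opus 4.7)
The plan is to exploit total geodesy to pin all first derivatives of $\xi,\eta$ inside $\underline{\phi}$, and then use the integrability relation $\partial\overline{\partial}\xi=\overline{\partial}\partial\xi$ to extract an algebraic identity forcing $\{\xi,\overline{\xi},\eta,\overline{\eta}\}$ to span only a rank-$2$ sub-bundle of $\underline{\phi}^{\perp}$. The resulting rank-$4$ parallel sub-bundle $V=\underline{\phi}\oplus\mathrm{span}\{\xi,\eta\}$ will, by linear fullness of $\phi$, coincide with $M\times\mathbb{C}^{N}$, giving $N=4$. The main technical step is the integrability computation and the extraction of the key vector relation.

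Work in the gauge supplied by Lemma 2, in which $\partial X=\xi$, $\overline{\partial}X=\eta$, and hence $\partial\overline{X}=\overline{\eta}$, $\overline{\partial}\overline{X}=\overline{\xi}$. Total geodesy $\|B\|^{2}\equiv 0$ and \eqref{eq:3.32} together give $\pi_{\phi^{\perp}}\partial\xi=0=\pi_{\phi^{\perp}}\overline{\partial}\eta$. Computing the Hermitian components $\langle\partial\xi,X\rangle$ and $\langle\partial\xi,\overline{X}\rangle$ using $\langle\xi,X\rangle=\langle\xi,\overline{X}\rangle=\langle\xi,\eta\rangle=0$ yields $\partial\xi=-\langle\xi,\overline{\xi}\rangle\overline{X}$ and, symmetrically, $\overline{\partial}\eta=-\langle\eta,\overline{\eta}\rangle\overline{X}$. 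Combined with \eqref{eq:3.5} these formulas determine all first derivatives of $\xi,\overline{\xi},\eta,\overline{\eta}$ inside $\underline{\phi}$.

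Feeding these formulas into $\partial\overline{\partial}\xi=\overline{\partial}\partial\xi$ produces the vector identity
\[
-\xi+\langle\xi,\overline{\xi}\rangle\overline{\xi}-\langle\xi,\overline{\eta}\rangle\overline{\eta}+\bigl(\overline{\partial}\langle\xi,\overline{\xi}\rangle-\partial\langle\xi,\overline{\eta}\rangle\bigr)\overline{X}=0.
\]
Pairing against $\overline{X}$ kills every vector-term and forces the scalar coefficient of $\overline{X}$ to vanish, so the identity reduces to $\xi=\langle\xi,\overline{\xi}\rangle\overline{\xi}-\langle\xi,\overline{\eta}\rangle\overline{\eta}$; hence $\xi\in\mathrm{span}\{\overline{\xi},\overline{\eta}\}$, and conjugating, $\overline{\xi}\in\mathrm{span}\{\xi,\eta\}$. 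These two inclusions give $\mathrm{span}\{\xi,\eta\}=\mathrm{span}\{\overline{\xi},\overline{\eta}\}$ and thus $\mathrm{span}\{\xi,\overline{\xi},\eta,\overline{\eta}\}=\mathrm{span}\{\xi,\eta\}$, which has rank $2$ because $\xi$ and $\eta$ are orthogonal unit vectors by \eqref{eq:3.1} and \eqref{eq:2.4}. Therefore $V:=\underline{\phi}\oplus\mathrm{span}\{\xi,\eta\}$ is a rank-$4$ parallel sub-bundle; linear fullness of $\phi$ forces $V=M\times\mathbb{C}^{N}$, and we conclude $N=4$.
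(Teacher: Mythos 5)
Your proof is correct in substance but takes a genuinely different route from the paper's. Both arguments begin the same way, using $\|B\|^2=0$ and \eqref{eq:3.32} to get $\partial\xi=-\langle\xi,\overline{\xi}\rangle\overline{X}$ and $\overline{\partial}\eta=-\langle\eta,\overline{\eta}\rangle\overline{X}$. From there the paper works with the harmonic sequence \eqref{eq:3.16}: it computes $\underline{\phi}_2=\partial'\underline{\phi}_1$, shows it is spanned by vectors lying in $\underline{\phi}$, rules out the degeneracies $\langle\xi,\overline{\xi}\rangle=0$ and $\langle\eta,\overline{\eta}\rangle=0$ by invoking the standing assumption that the harmonic sequence extends infinitely in both directions, and concludes $\underline{\phi}_2=\underline{\phi}$, hence $N=4$. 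You instead apply the flatness of the trivial connection, $\partial\overline{\partial}\xi=\overline{\partial}\partial\xi$, to extract the pointwise relation $\xi=\langle\xi,\overline{\xi}\rangle\overline{\xi}-\langle\xi,\overline{\eta}\rangle\overline{\eta}$ and deduce that $\underline{\phi}\oplus\mathrm{span}\{\xi,\eta\}$ is a constant $4$-plane. Your route is more elementary and self-contained: it does not appeal to the exclusion of terminating harmonic sequences, and as a byproduct it even yields $\langle\xi,\overline{\eta}\rangle=0$ (so by Proposition 2 the immersion is automatically minimal in $\mathbb{C}P^{N-1}$). One step is stated too quickly, however: the two inclusions $\xi\in\mathrm{span}\{\overline{\xi},\overline{\eta}\}$ and $\overline{\xi}\in\mathrm{span}\{\xi,\eta\}$ do \emph{not} by themselves give $\mathrm{span}\{\xi,\eta\}=\mathrm{span}\{\overline{\xi},\overline{\eta}\}$ (e.g.\ $\xi=e_1$, $\eta=\tfrac{1}{\sqrt2}(e_2+\sqrt{-1}e_3)$ satisfies both inclusions with $\eta\perp\mathrm{span}\{\overline{\xi},\overline{\eta}\}$). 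You must either also run the symmetric integrability computation $\partial\overline{\partial}\eta=\overline{\partial}\partial\eta$ to get $\eta=\langle\eta,\overline{\eta}\rangle\overline{\eta}-\langle\eta,\overline{\xi}\rangle\overline{\xi}$, or pair your identity with its own conjugate and use $|\xi|=|\overline{\xi}|=1$ and Cauchy--Schwarz to conclude $1=|\langle\xi,\overline{\xi}\rangle|^2-|\langle\xi,\overline{\eta}\rangle|^2$, forcing $\langle\xi,\overline{\eta}\rangle=0$ and $\overline{\xi}$ to be a unimodular multiple of $\xi$ (and likewise for $\eta$). With that one-line supplement the span equality, and hence $N=4$, follows as you claim.
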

\begin{proof}
From the assumption that $\phi$ is totally geodesic,  
$$\partial \xi=\langle\partial\xi, \overline{X}\rangle\overline{X}=-\langle\xi, \overline{\xi}\rangle\overline{X}, \ 
\overline{\partial}\eta=\langle\overline{\partial}\eta, \overline{X}\rangle\overline{X}=-\langle\eta, \overline{\eta}\rangle\overline{X}$$ holds by using of  \eqref{eq:3.32}. This together with \eqref{eq:3.5} show that $\langle\xi, \overline{\xi}\rangle, \ \langle\eta, \overline{\eta}\rangle, \ \langle\xi, \overline{\eta}\rangle$ are all constants. Then for $\phi_1$ in harmonic sequence 
\begin{equation}\cdots\stackrel{\partial{''}}
	{\longleftarrow} \overline{\underline{\phi}}_{2}\stackrel{\partial{''}} {\longleftarrow}
	\overline{\underline{\phi}}_{1}\stackrel{\partial{''}}
	{\longleftarrow} \underline{\phi} =
	\underline{\phi}_0\stackrel {\partial{'}}
	{\longrightarrow}\underline{\phi}_1  \stackrel{\partial{'}}
	{\longrightarrow}\underline{\phi}_2 \stackrel{\partial{'}}
	{\longrightarrow}\cdots, \label{eq:3.16}\end{equation} 
  it is spanned by $\xi$ and $\overline{\eta}-\langle \overline{\eta}, \xi \rangle\xi$. Then $\underline{\phi}_2={\partial}'\underline{\phi}_1$ is spanned by 
$\partial\xi$ and $\partial(\overline{\eta}-\langle \overline{\eta}, \xi \rangle\xi)$. The later one can be expressed by $$\partial(\overline{\eta}-\langle \overline{\eta}, \xi \rangle\xi)=-\langle \overline{\eta}, \eta \rangle X +\langle \overline{\eta}, \xi \rangle
\langle \xi, \overline{\xi}\rangle \overline{X}.$$

We first claim that $\langle \xi, \overline{\xi}\rangle\neq0$. If not, then $\langle \xi, \overline{\xi}\rangle=0$ holds, which gives
$$\partial \xi=0, \ \langle \eta, \overline{\eta}\rangle\neq0, \ \underline{\phi}_2=\underline{X}.$$
The last relation means that $\underline{X}$ is harmonic, i.e., there exists a harmonic map from $M$ to $\mathbb{C}P^{N+1}$ such that $$\underline{\phi}=\underline{\overline{f}}_{0}\oplus\underline{f}_0, 
\ \underline{\phi}_1=\underline{\overline{f}}_{1}\oplus\underline{f}_{-1}, \ \underline{\phi}_2=\underline{f}_{0}, \ \underline{\phi}_3=\underline{f}_{1}, \underline{\phi}_4=0,$$ where
$\cdots\stackrel{\partial{''}}
{\longleftarrow}\underline{f}_{-2}\stackrel{\partial{''}} {\longleftarrow}
\underline{\eta}=\underline{f}_{-1}\stackrel{\partial{''}}
{\longleftarrow} \underline{X} =
\underline{f}_0\stackrel {\partial{'}}
{\longrightarrow}\underline{\xi}=\underline{f}_1  \stackrel{\partial{'}}
{\longrightarrow}0$ is the harmonic sequence derived by $\underline{f}_0$, which contradicts the fact that  harmonic sequence \eqref{eq:3.16} extends infinitely in both directions.

Using similar discussion gives $\langle \eta, \overline{\eta}\rangle\neq0$. Then we get that $\phi_2=\phi$, which means $N=4$.
\end{proof}

As mentioned in the introduction, this paper is contributed to studying the family of totally real  surfaces that are minimal in both $Q_{N-2}$ and $\mathbb{C}P^{N-1}$. Then we have the following useful characterization.
  
  \begin{theo}
  	Let $\phi: M\rightarrow G(2,N;\mathbb{R})$ be a linearly full totally real isometric harmonic map. If its corresponding immersion  $\psi: M\rightarrow Q_{N-2} \subseteq \mathbb{C}P^{N-1}$ is also harmonic  in  $\mathbb{C}P^{N-1}$, then in  \eqref{eq:3.16}, for $k=0,1,2,...$, \begin{equation}\underline{\phi}_k=\underline{\overline{f}}_{-k}\oplus\underline{f}_k,\label{eq:3.17}\end{equation} where $\underline{f}_k$ are line bundles belonging to the following harmonic sequence in some $\mathbb{C}P^{n}, \ (n \leq N-1)$
\begin{equation}\cdots\stackrel{\partial{''}}
  		{\longleftarrow} \underline{f}_{-2}\stackrel{\partial{''}} {\longleftarrow}
  		\underline{f}_{-1}\stackrel{\partial{''}}
  		{\longleftarrow}  
  		\underline{f}_0\stackrel {\partial{'}}
  		{\longrightarrow}\underline{f}_1  \stackrel{\partial{'}}
  		{\longrightarrow}\underline{f}_2 \stackrel{\partial{'}}
  		{\longrightarrow}\cdots. \label{eq:3.18}\end{equation}
\end{theo}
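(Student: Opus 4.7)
Plan: The strategy is induction on $|k|$. First invoke the local setup from Section~3: by Lemma~2, choose a local unit section $X$ of $\underline{\phi}$ for which \eqref{eq:3.5} holds. Since $\psi$ is assumed harmonic in $\mathbb{C}P^{N-1}$, Proposition~2 gives $\langle \xi, \overline{\eta}\rangle = 0$, so \eqref{eq:3.5} becomes the streamlined identity $\overline{\partial}\xi = -X = \partial \eta$. Let $\{\underline{f}_k\}$ denote the $\mathbb{C}P^n$-harmonic sequence of $\psi$, where $n \leq N-1$ is determined by the linear span of $\psi$, normalized so that $f_0 = X$, $f_1 = \xi$, $f_{-1} = \eta$.

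The base cases are immediate: $\underline{\phi}_0 = \underline{X} \oplus \underline{\overline{X}} = \underline{f}_0 \oplus \underline{\overline{f}}_0$ by definition, and $\underline{\phi}_1 = \partial'\underline{\phi}_0$ is spanned by $\pi_{\phi_0^\perp} \partial X = \xi = f_1$ and $\pi_{\phi_0^\perp} \partial \overline{X} = \overline{\eta} = \overline{f}_{-1}$, using the orthogonalities that accompany the chosen~$X$. For the inductive step, assume $\underline{\phi}_{k-1} = \underline{f}_{k-1} \oplus \underline{\overline{f}}_{-(k-1)}$. Then $\underline{\phi}_k$ is spanned by $\pi_{\phi_{k-1}^\perp} \partial f_{k-1}$ and $\pi_{\phi_{k-1}^\perp} \partial \overline{f}_{-(k-1)}$; from the $\mathbb{C}P^n$-sequence relations, each of these equals (up to scaling) a section of $\underline{f}_k$ and $\underline{\overline{f}}_{-k}$ respectively, provided the Hermitian cross-orthogonalities $\langle f_k, \overline{f}_{-(k-1)}\rangle = 0$ and $\langle \overline{f}_{-k}, f_{k-1}\rangle = 0$ hold---equivalently, vanishing of the complex-symmetric cross-pairings $\beta(f_k, f_{-(k-1)}) := \sum_i (f_k)_i (f_{-(k-1)})_i$ and $\beta(f_{k-1}, f_{-k})$. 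Granted these, rank-$2$ of $\underline{\phi}_k$ together with linear fullness forces the two projected sections to be linearly independent, yielding $\underline{\phi}_k = \underline{\overline{f}}_{-k} \oplus \underline{f}_k$.

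The technical heart, and the main obstacle, is propagating the cross-pairing vanishing. The seeds are $\beta(f_0, f_0) = 0$ (the hyperquadric condition), $\beta(f_0, f_{\pm 1}) = 0$ (obtained by differentiating $\sum_i X_i^2 = 0$), and $\beta(f_1, f_{-1}) = 0$ (Proposition~2). Plugging the identity $\overline{\partial}\xi = -X$ into the commutation $\partial\overline{\partial} = \overline{\partial}\partial$ and matching components against the $\mathbb{C}P^n$-decomposition forces, inductively in $k$, a normalized recursion: $\partial f_k$ lies purely in $\underline{f}_{k+1}$ for $k \geq 0$ and $\overline{\partial} f_k$ purely in $\underline{f}_{k-1}$ for $k \geq 1$, with sign-symmetric statements on the negative branch. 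With these clean recursions, the Leibniz identities
\[
\partial\beta(f_i, f_j) = \beta(\partial f_i, f_j) + \beta(f_i, \partial f_j), \qquad \overline{\partial}\beta(f_i, f_j) = \beta(\overline{\partial}f_i, f_j) + \beta(f_i, \overline{\partial}f_j)
\]
become clean first-order recursions shifting indices by $\pm 1$. Strong induction on $i+j$, starting from the seeds above, then delivers $\beta(f_i, f_{-j}) = 0$ for all $i, j \geq 0$ with $|i - j| \leq 1$, which covers exactly the pairs needed at every step of the main induction. Finally, conjugate symmetry $\overline{\underline{\phi}}_k = \underline{\phi}_{-k}$, a consequence of $\phi$ being real in $G(2,N;\mathbb{R})$, extends the conclusion from $k \geq 0$ to all $k \in \mathbb{Z}$.
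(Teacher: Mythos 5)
Your proposal is correct and follows essentially the same route as the paper: induction on $k$, with the key step being the propagation of the vanishing of the complex-bilinear cross-pairings $\langle f_i,\overline{f}_{-j}\rangle$ (your $\beta(f_i,f_{-j})$) by differentiating already-established vanishings and using the harmonic-sequence decomposition of $\partial f_i$ and $\overline{\partial} f_i$. The only quibble is that you neither need nor can in general arrange that $\partial f_k$ lie \emph{purely} in $\underline{f}_{k+1}$; the component along $\underline{f}_k$ is harmless because the corresponding pairing already vanishes by the induction hypothesis, which is exactly how the paper argues.
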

\begin{proof}
Let $\underline{f}_0=\underline{X}$, 
it follows from the supposition that $\underline{f}_0:M\rightarrow \mathbb{C}P^{n} \ (n \leq N-1)$ is harmonic and generates harmonic sequence  \eqref{eq:3.18} with $\underline{\xi}=\underline{f}_1, \ \underline{\eta}=\underline{f}_{-1}, \ \langle \overline{f}_{-1}, f_{1} \rangle=0$. Then we have
\begin{equation}\underline{\phi}=\underline{\overline{f}}_0\oplus\underline{f}_0, \quad \underline{\phi}_{1}=\underline{\overline{f}}_{-1}\oplus\underline{f}_1.\label{eq:3.19}\end{equation}

In the following we prove \eqref{eq:3.17} by induction on $k$. 
When $k=0, 1$,  the conclusion holds by \eqref{eq:3.19}. Suppose the conclusion is true for $0,1,...,k$. Condider the case of $k+1$. By induction hypotheses we have for $p=0,1,...,k$ $\underline{\phi}_{p}=\underline{\overline{f}}_{-p}\oplus\underline{f}_p$.

Then $\langle \overline{f}_{-k}, f_{k+1} \rangle=0$ can be obtained 
by differentiating with respect to $z$ in $\langle f_{k}, \overline{f}_{-k} \rangle=0$ and using relation $\langle \overline{f}_{k}, f_{-(k-1)} \rangle=0$, which implies \begin{equation}A^{'}_{\phi_k}(f_k):=\pi_{\phi_k^{\bot}}(\partial f_k)=f_{k+1}.\label{eq:3.20}\end{equation}
Similarly 
\begin{equation}A^{'}_{\phi_k}(\overline{f}_{-k}):=\pi_{\phi_k^{\bot}}(\partial \overline{f}_{-k})=\overline{f}_{-(k+1)} \label{eq:3.21}\end{equation} can be verified by formula
$\langle \overline{f}_{-(k+1)}, f_k\rangle=0$, which is given by  
differentiating with respect to $\overline{z}$ in $\langle f_{k}, \overline{f}_{-k} \rangle=0$ and using relation $\langle \overline{f}_{-k}, f_{k-1} \rangle=0$.
$A^{'}_{\phi_k}$ and $A^{''}_{\phi_k}$ shown here  are in fact two vector bundle morphisms  from $\phi_k$ to $\phi_k^{\bot}$, for more details see (cf. \cite{[5]}).

To complish our proof, next we only need to verify that $$\langle f_{k+1}, \overline{f}_{-(k+1)} \rangle=0$$ holds, which can be obtained by differentiating with respect to $\overline{z}$ in $\langle  \overline{f}_{-(k+1)}, f_{k} \rangle=0$. So we finish our proof.
\end{proof}

   \section{Totally real flat minimal surfaces in hyperquadric}  
  In this section,  we regard harmonic maps from $M$ to $G(2,N;\mathbb{R})$ as conformal minimal immersions of $M$ in $G(2,N;\mathbb{R})$.  To analyze them, we need 
  the following result, which appears in \cite{[19]}, see also \cite{[18]} and \cite{[20]}.
   \begin{theo}
  Let $f: M\rightarrow \mathbb{C}P^{n}$ be a linearly full totally real harmonic map with induced metric $f^*ds^2=2dzd\overline{z}$. Then up to a unitary equivalence, $f=[V_{0}^{(n)}]$, where
 \begin{equation}V_{0}^{(n)}(z) = \left(\begin{array}{ccccccc}
e^{z-\overline{z}}\sqrt{r_0} \\
e^{a_1z-\overline{a}_1\overline{z}}\sqrt{r_1} \\
\vdots  \\
e^{a_nz-\overline{a}_n\overline{z}}\sqrt{r_n}
     \end{array}\right),\label{eq:4.1}\end{equation}
   with $r_i > 0$ and $a_i = e^{\sqrt{-1}\theta_i}$ for $i=1,...,n$, satisfying $0<\theta_1<\cdots<\theta_n<2\pi$,  \begin{equation}r_0+r_1+\cdots+r_n=1, \ r_0+\sum_{i=1}^na_ir_i=0, \  r_0+\sum_{i=1}^na_i^2r_i=0.\label{eq:4.300}\end{equation} Here $f$ extends to a totally real harmonic map $f: \mathbb{C}\rightarrow \mathbb{C}P^{n}$.
 \end{theo}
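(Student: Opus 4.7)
The plan is to use the harmonic sequence generated by $f$ together with the flatness and totally real hypotheses to reduce the problem to an eigenvalue problem for a constant matrix, whose solutions are forced into the stated exponential form.

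First, because $f$ is totally real it is neither holomorphic nor antiholomorphic, so the harmonic sequence
$$\cdots\stackrel{\partial''}{\longleftarrow}\underline{f}_{-1}\stackrel{\partial''}{\longleftarrow}\underline{f}_0=\underline{f}\stackrel{\partial'}{\longrightarrow}\underline{f}_1\stackrel{\partial'}{\longrightarrow}\cdots$$
is bi-infinite. I would choose a unit local lift $V_0$ of $\underline{f}$ with $|V_0|\equiv 1$, then use a phase gauge $V_0\mapsto e^{\sqrt{-1}\mu(z,\overline{z})}V_0$ to arrange $\langle \partial V_0,V_0\rangle=0$, and construct the sequence of lifts by $V_{k+1}=\partial V_k-(\langle\partial V_k,V_k\rangle/|V_k|^2)V_k$ and its conjugate. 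The isometric condition $f^\ast ds^2=2dzd\overline{z}$ then reads $|V_1|^2\equiv 1$ (under the correct normalisation), and the totally real condition reads $|V_{-1}|=|V_1|$.

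Second, the harmonicity equation \eqref{eq:3.4} specialised to $\psi:M\to \mathbb{C}P^n$ (rather than to the hyperquadric) becomes
$$\overline{\partial}\xi=-|\xi|^2V_0,\qquad \partial\eta=-|\eta|^2V_0$$
in this gauge. Iterating along the harmonic sequence and using the constancy of $|V_{\pm 1}|$ just derived, I would show inductively that every $V_k$ is of the form $P_k(D)V_0$ for a scalar polynomial $P_k$ in a single \emph{constant} endomorphism $D\in\mathrm{End}(\mathbb{C}^{n+1})$, characterised by $\partial V_0=DV_0$. The dual identity is $\overline{\partial}V_0=-D^\ast V_0$, and the compatibility $\partial\overline{\partial}=\overline{\partial}\partial$ forces $DD^\ast=D^\ast D$, so $D$ is normal. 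The flat isometric datum then forces all eigenvalues of $D$ to share a common modulus, which after rescaling $z$ we may take to be $1$.

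Third, diagonalising the normal matrix $D$ by some $U\in U(n+1)$ converts $V_0(z)$ into
$$UV_0(z)=\bigl(\sqrt{r_0}\,e^{a_0 z-\overline{a}_0\overline{z}},\ldots,\sqrt{r_n}\,e^{a_n z-\overline{a}_n\overline{z}}\bigr)^T,$$
with $|a_j|=1$ and $r_j\geq 0$. Linear fullness of $f$ rules out both coincidences $a_i=a_j$ (with $i\neq j$ and $r_ir_j>0$) and vanishing $r_j=0$. The three constraints in \eqref{eq:4.300} then arise respectively from $|V_0|^2=1$ (giving $\sum r_j=1$); from the orthogonality $\langle V_1,V_0\rangle=0$ built into the harmonic sequence (giving $\sum r_j a_j=0$); and from $\langle V_2,V_0\rangle=0$, which is equivalent to the vanishing of the $(2,0)$-part of the pullback Fubini-Study metric, i.e.\ to totally real (giving $\sum r_j a_j^2=0$). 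Reindexing so that $a_0=1$ and $a_i=e^{\sqrt{-1}\theta_i}$ with $0<\theta_1<\cdots<\theta_n<2\pi$ finishes the proof. The main obstacle is Step 2: rigorously converting the harmonicity equation for $f$ into the constant-coefficient eigenvalue equation $\partial V_0=DV_0$. This requires a careful iteration along the harmonic sequence combined with the totally real reflection symmetry, which is what ensures that $D$ commutes with $D^\ast$ and can therefore be unitarily diagonalised.
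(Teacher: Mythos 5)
First, a point of reference: the paper itself offers no proof of Theorem 2 --- it is quoted from Kenmotsu \cite{[20]}, Bolton--Woodward \cite{[18]} and Jensen--Liao \cite{[19]} --- so your proposal has to be measured against those proofs, and in outline it does follow their strategy: put the moving frame of the harmonic sequence into a constant-coefficient linear system, diagonalise a normal matrix, and read off the exponentials. Your identification of the origin of the three conditions in \eqref{eq:4.300} is essentially right, up to two misattributions: $\sum_j r_ja_j^2=0$ is the \emph{conformality} of $f$ (the vanishing of $\langle V_1,V_{-1}\rangle$, i.e.\ of the $(2,0)$-part of the pullback metric), not the totally real condition, which is $|V_1|=|V_{-1}|$ (vanishing of the pullback K\"ahler form --- incidentally, this is also exactly the integrability condition for the phase gauge $\langle\partial V_0,V_0\rangle=0$ you use in Step 1); and $|a_j|=1$ comes from harmonicity ($DD^*V_0(z)=V_0(z)$ for all $z$ plus linear fullness forces $DD^*=I$), not merely from the flat isometric datum.

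The genuine gap is precisely the step you yourself flag as ``the main obstacle'': the existence of a \emph{constant} matrix $D$ with $\partial V_0=DV_0$ and $\overline{\partial}V_0=-D^*V_0$ is the entire analytic content of the theorem, and your proposal asserts it rather than proves it. As written, the scheme is circular: a single vector identity $\partial V_0(z)=DV_0(z)$ does not determine an endomorphism $D$, so $D$ cannot be ``characterised'' by that equation before one knows that the whole frame satisfies a constant-coefficient system. What must actually be established is that all invariants of the harmonic sequence are constant: from $|V_0|^2=1$ together with flatness and total reality one gets $|V_{\pm1}|^2=1$; the standard recursion $\partial\overline{\partial}\log|V_k|^2=|V_{k+1}|^2/|V_k|^2-|V_k|^2/|V_{k-1}|^2$ then propagates $|V_k|^2\equiv1$ to all $k$, giving $\partial V_k=V_{k+1}$ and $\overline{\partial}V_k=-V_{k-1}$; one must then show that the Gram matrix $\langle V_j,V_k\rangle$ depends only on $j-k$ and is constant --- this is where real work remains, since $\langle V_i,V_j\rangle=0$ for $|i-j|\geq 2$ is \emph{not} automatic for non-isotropic harmonic sequences, the transport equations only couple the diagonals of the Gram matrix, and on a proper domain $M\subsetneq\mathbb{C}$ one cannot finish with a Liouville argument. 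Only after that does a spanning subset of $\{V_k\}$ satisfy a genuinely constant-coefficient first-order system from which $D$, its normality, and the ``dual identity'' $\overline{\partial}V_0=-D^*V_0$ (which you also assume without derivation) can be extracted. Until that chain is supplied, the proposal is an accurate outline of the known argument rather than a proof.
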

Set $$A=diag(1,a_1,...,a_n),$$ then the harmonic sequence of $f$ is given by 
\begin{equation}\cdots\stackrel{\partial{''}}
{\longleftarrow} \underline{f}_{-2}\stackrel{\partial{''}} {\longleftarrow}
\underline{f}_{-1}\stackrel{\partial{''}}
{\longleftarrow} \underline{f}\stackrel {\partial{'}}
{\longrightarrow}\underline{f}_1  \stackrel{\partial{'}}
{\longrightarrow}\underline{f}_2 \stackrel{\partial{'}}
{\longrightarrow}\cdots, \end{equation} 
where  $f_i=[A^{i}V_{0}^{(n)}]$, for any $i\in \mathbb{Z}$.

  Maps $V_{0}^{(n)}: M\rightarrow S^{2n+1} \subseteq \mathbb{C}^{n+1}$ given by \eqref{eq:4.1} with the first and third equations of \eqref{eq:4.300} satisfied are flat minimal immersions. The second equation of \eqref{eq:4.300} is the condition that the map also be horizontal with respect to the Hopf projection $S^{2n+1} \rightarrow \mathbb{C}P^{n}$. Any totally real flat minimal surface in $\mathbb{C}P^{n}$  is equivalent to a flat horizontal minimal surface in 
  $S^{2n+1}$ (cf. \cite{[90], [19]}).

  Using this result we obtain

  \begin{prop}
  Let $\phi: M\rightarrow G(2,N;\mathbb{R})$ be a linearly full totally real flat minimal immersion. Then $\phi$ is non-degenerate, i.e. rank $\phi_1=$ rank $\phi_{-1}=2$ in \eqref{eq:3.16}.
 \end{prop}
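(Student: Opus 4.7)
The plan is to combine the explicit form of totally real flat harmonic maps to $\mathbb{C}P^{n}$ given by Theorem 2 with the splitting $\underline{\phi}_k=\underline{\overline{f}}_{-k}\oplus\underline{f}_k$ from Theorem 1, and then show directly that for $k=\pm 1$ the two line summands in the splitting cannot coincide. This reduces non-degeneracy to an elementary algebraic check on exponential functions.

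First, I would invoke the standing Section~4 hypothesis that $\psi=[X]$ is simultaneously harmonic in $\mathbb{C}P^{N-1}$, so that Theorem 1 applies to the harmonic sequence in \eqref{eq:3.16}. Setting $\underline{f}_0=\underline{X}$, Theorem 2 allows us, after a suitable unitary change of coordinates in $\mathbb{C}^N$, to assume $X=V_0^{(n)}$ for some $n\leq N-1$, with $a_i=e^{\sqrt{-1}\theta_i}$ and $0<\theta_1<\cdots<\theta_n<2\pi$ the associated distinct parameters. Linear fullness of $\phi$ in $G(2,N;\mathbb{R})$ forces $n\geq 1$, because if $n=0$ then $V_0^{(n)}\in\mathbb{C}^{1}$ and $\overline{X}$ would be colinear with $X$, precluding $\phi=\overline{X}\oplus X$ from having rank $2$. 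The harmonic sequence generated by $\underline{f}_0$ is then $\underline{f}_k=[A^kV_0^{(n)}]$ with $A=\mathrm{diag}(1,a_1,\ldots,a_n)$.

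By Theorem 1, $\underline{\phi}_1=\underline{\overline{f}}_{-1}+\underline{f}_1$, and the goal reduces to showing the two rank-one summands are linearly independent. A generator of $\underline{f}_1$ is $AV_0^{(n)}$, whose $i$-th component is $a_i\sqrt{r_i}\,e^{a_iz-\overline{a}_i\overline{z}}$. Since $|a_i|=1$ gives $\overline{A^{-1}}=A$, a generator of $\underline{\overline{f}}_{-1}$ is $A\overline{V_0^{(n)}}$, whose $i$-th component is $a_i\sqrt{r_i}\,e^{-(a_iz-\overline{a}_i\overline{z})}$. If these were pointwise proportional, say $A\overline{V_0^{(n)}}=c(z,\overline{z})\,AV_0^{(n)}$, then since every $a_i\sqrt{r_i}\neq 0$ we would need $e^{-2(a_iz-\overline{a}_i\overline{z})}=c$ for every $i=0,1,\ldots,n$. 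Using the algebraic independence of $z$ and $\overline{z}$, this forces all $a_i$ to coincide, contradicting $n\geq 1$ and the strict inequalities $0<\theta_1<\cdots<\theta_n<2\pi$. Hence $\mathrm{rank}\,\phi_1=2$.

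For $\phi_{-1}$, the identical calculation applies with $A$ replaced by $A^{-1}$; equivalently, complex conjugation together with the reality of $\phi$ gives $\overline{\underline{\phi}}_{1}=\underline{\phi}_{-1}$, so ranks agree. The main obstacle I expect is the bookkeeping needed to reconcile the normalized form of Theorem 2 (unique only up to unitary equivalence) with the local sections $\xi,\eta$ used in Theorem 1 and \eqref{eq:3.16}; once that is in place, the distinctness of the phases $\theta_i$ settles the proportionality question in one step.
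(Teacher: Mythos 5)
There is a genuine gap: your proof rests on a hypothesis that the proposition does not grant. You invoke the ``standing Section 4 hypothesis'' that $\psi=[X]$ is also harmonic in $\mathbb{C}P^{N-1}$ in order to apply Theorem 1 (the splitting $\underline{\phi}_k=\underline{\overline{f}}_{-k}\oplus\underline{f}_k$) and Theorem 2 (the normal form $X=UV_0^{(n)}$). But that hypothesis is introduced in the paper only \emph{after} this proposition (``From now on we consider \dots and $\underline{X}$ is also minimal''), and the proposition itself assumes only that $\phi$ is a linearly full totally real flat minimal immersion into $G(2,N;\mathbb{R})$; by Proposition 2 the extra condition amounts to $\langle\xi,\overline{\eta}\rangle=0$, which need not hold here. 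Moreover, if one \emph{is} allowed to assume it, Theorem 1 already exhibits $\underline{\phi}_{\pm 1}$ as an orthogonal direct sum of two nonzero line bundles, so non-degeneracy is immediate and your exponential computation adds nothing. The paper instead argues by contradiction in the general setting: degeneracy forces $\underline{\xi}$ and $\underline{\overline{\eta}}$ to span the same line, which via \eqref{eq:3.5} yields $\underline{\overline{f}}_0=\underline{f}_0$ and hence $\phi=f_0\oplus c_0$ with $c_0$ constant; only then does it follow that $f_0$ itself is a totally real flat harmonic map into a lower-dimensional projective space, so Theorem 2 applies to it, and the resulting relation $\overline{V}_0^{(N-2)}=WV_0^{(N-2)}$ with $W=U^TU$ is ruled out by a Vandermonde determinant.

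A secondary problem: even granting the extra hypothesis, your non-proportionality step compares $AV_0^{(n)}$ and $A\overline{V_0^{(n)}}$ componentwise, which tacitly sets $U=I$. After normalizing $f_0=UV_0^{(n)}$, the conjugate section is $\overline{U}\,\overline{V_0^{(n)}}$, so the question is whether $A\overline{V_0^{(n)}}$ is proportional to $WAV_0^{(n)}$ for the symmetric unitary $W=U^TU$, not whether two diagonal exponential vectors are proportional. This interaction with $W$ is exactly what the paper's analysis (and its Vandermonde argument) is built to control; it is the substance of the proof, not bookkeeping to be reconciled later.
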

    \begin{proof}
 If $\phi: M\rightarrow G(2,N;\mathbb{R})$ is degenerate, then    \eqref{eq:3.16} becomes
   \begin{equation}\cdots\stackrel{\partial{''}}
{\longleftarrow} \overline{\underline{f}}_{2}\stackrel{\partial{''}} {\longleftarrow}
\overline{\underline{f}}_{1}\stackrel{\partial{''}}
{\longleftarrow} \underline{\phi} =
\underline{\phi}_0\stackrel {\partial{'}}
{\longrightarrow}\underline{f}_1  \stackrel{\partial{'}}
{\longrightarrow}\underline{f}_2 \stackrel{\partial{'}}
{\longrightarrow}\cdots, \label{eq:4.3}\end{equation} 
where $\underline{\overline{f}}_i$ and  $\underline{f}_i, \ i=1,2,...$ are  harmonic line bundles from $M$ to $\mathbb{C}P^{N-1}$, and $\overline{f}_0$ and $f_0$ are two local sections of $ \underline{\phi}$.
From our discussion before, we have $$\underline{\xi}=\underline{f}_{1}, \  \underline{\eta}=\underline{\overline{f}}_{1}.$$
This together with \eqref{eq:3.5} implies that 
$$\underline{\overline{f}}_{0}=\underline{f}_{0},$$ which establishes 
$$\phi=f_{0}\oplus c_{0},$$
where $\underline{c}_{0}$ is a constant in $\mathbb{C}^{N}$. 

Since $\phi=f_{0}\oplus c_{0}:M\rightarrow G(2,N;\mathbb{R})$ is a totally real  flat harmonic map, then $\underline{f}_{0}:M\rightarrow \mathbb{C}P^{N-2}$ is also a  totally real  flat harmomic map. By using of Theorem 2, there exists some unitary matrix $U \in U(N-1)$ such that 
$$f_{0}=UV_{0}^{(N-2)},$$ which shows
\begin{equation}\left(\begin{array}{ccccccc}
e^{\overline{z}-z}\sqrt{r_0} \\
e^{\overline{a}_1\overline{z}-a_1z}\sqrt{r_1} \\
\vdots  \\
e^{\overline{a}_{n}\overline{z}-a_{n}z}\sqrt{r_{n}}
     \end{array}\right)=U^TU\left(\begin{array}{ccccccc}
e^{z-\overline{z}}\sqrt{r_0} \\
e^{a_1z-\overline{a}_1\overline{z}}\sqrt{r_1} \\
\vdots  \\
e^{a_{n}z-\overline{a}_{n}\overline{z}}\sqrt{r_{n}}
     \end{array}\right),\label{eq:4.4}\end{equation}
      where $n=N-2$.
     Then set \begin{equation}W=U^TU=(w_{ij}),\label{eq:4.5}\end{equation}
    we have
\begin{equation}W^T=W, \quad  W^*W=I\label{eq:4.6}\end{equation} holds.  
From   \eqref{eq:4.4}      we obtain
     $$\sqrt{r_0}=w_{00}\sqrt{r_0}e^{2z-2\overline{z}}+w_{01}\sqrt{r_1}e^{(a_{1}+1)z-(\overline{a}_{1}+1)\overline{z}}+\cdots+w_{0n}\sqrt{r_n}e^{(a_{n}+1)z-(\overline{a}_{n}+1)\overline{z}},$$ which gives
  $$\left(\begin{array}{ccccccc}
2 & a_{1}+1 & \cdots & a_{n}+1 \\
2^{2} & (a_{1}+1)^{2} & \cdots & (a_{n}+1)^{2} \\
\vdots & \vdots &  &\vdots  \\
2^{n} & (a_{1}+1)^{n} & \cdots & (a_{n}+1)^{n}
     \end{array}\right)
     \left(\begin{array}{ccccccc}
w_{00}\sqrt{r_0} \\
w_{01}\sqrt{r_1} \\
\vdots  \\
w_{0n}\sqrt{r_n}
     \end{array}\right)
    =\left(\begin{array}{ccccccc}
0 \\
0 \\
\vdots  \\
0
     \end{array}\right).$$
     Then $w_{00}=w_{01}=\cdots=w_{0n}=0$ can be obtained since the determinant of 
     coefficient matrix of the above equation is
     $$2[\prod_{i=1}^{n}(a^2_i-1)][\prod_{1\leq j<i \leq n}(a_i-a_j)]\neq 0,$$
     which contradicts the fact $U\in U(N-1)$. This finishes the proof.
  \end{proof}

  	From now on we consider $\phi=\overline{X}\oplus X: \mathbb{C} \rightarrow G(2,N;\mathbb{R})$ as a linearly full totally real flat minimal immersion with induced metric $\phi^*ds^2=4dzd\overline{z}$, and $\underline{X}: \mathbb{C} \rightarrow \mathbb{C}P^{N-1}$ is also minimal. It follows from Theorem 1 that, locally, there exists a locally totally real minimal immersion ${\underline f}_0:\mathbb{C} \rightarrow \mathbb{C}P^{N-1}$ with induced metric  $f_0^*ds^2=2dzd\overline{z}$ such that
  	$$\phi=\overline{f}_0\oplus f_0.$$
  	Then by Theorem 2, there exists an unitary matrix $U \in U(N)$ such that $$f_0=UV_0^{(n)}$$ for some $1 \leq n \leq N-1$ (adding zeros to the end of $V^{(n)}_0$ such that it belongs to $\mathbb{C}^N$, in the absence of confusion, we also denote it by $V^{(n)}_0$). 
Then relation   $\langle f_0,\overline{f}_0\rangle=0$ becomes $\langle UV^{(n)}_0,\overline{U}\overline{V}^{(n)}_0\rangle=0$, which is equivalent to
  	\begin{equation}\tr WV^{(n)}_0V^{(n)T}_0=0.\label{eq:4.7}\end{equation} Through the expression of $V^{(n)}_0$ given in \eqref{eq:4.1}, set $a_0=1$, \eqref{eq:4.7} becomes 
  		\begin{equation}\sum_{i,j=0}^n(w_{ij}\sqrt{r_ir_j}e^{(a_i+a_j)z-(\overline{a}_i+\overline{a}_j)\overline{z}})=0.\label{eq:4.8}\end{equation}
Here $W$ satisfies \eqref{eq:4.5} and \eqref{eq:4.6}.  Define a set 
 $$G_W\triangleq\{U\in U(N)\rvert U^TU=W\}.$$
 For a given $W$, the following can be checked easily,
 \\(i) $\forall$ $O\in SO(N), U \in G_W$, we have that $OU\in G_W$;
 \\(ii) $\forall U, V\in G_W, \exists O\in SO(N), s.t. U=OV$. 
  
  Set $a_0=1$, in the following we analyse the conditions of $f_0$ in the linearly full case in more detail by 
  discussing $W$ in three cases respectively.
  
  {\bfseries Case I: $\forall i,j=0,1,2,...,n, a_i+a_j\neq 0$.} 
  
  Here we first claim that $\forall i,j,k,l=0,1,2,...,n$, if $\{i,j\}\neq\{k,l\}$, following important formula
  \begin{equation}a_i+a_j\neq a_k+a_l\label{eq:4.9}\end{equation} holds. Otherwise if $a_i+a_j=a_k+a_l$, it can be transformed as
  $$\cos \theta_i+\cos \theta_j=\cos \theta_k+\cos \theta_l, \quad \sin \theta_i+\sin \theta_j=\sin \theta_k+\sin \theta_l,$$
  which gives
  \begin{equation}\cos(\theta_i- \theta_j)=\cos(\theta_k- \theta_l).\label{eq:4.10}\end{equation}
  Without loss of generality, set $\theta_j \leq \theta_i,\ \theta_l \leq \theta_k$, then \eqref{eq:4.10} establishes 
 either $\frac{a_i}{a_j}=\frac{a_k}{a_l}$ or $\frac{a_i}{a_j}=\frac{a_l}{a_k}$, which is impossible. So \eqref{eq:4.9} holds.

Observing  \eqref{eq:4.8} we find  $\forall p \in \{0,1,2,...\}$,
  	  $$\sum_{i,j=0}^nw_{ij}\sqrt{r_ir_j}(a_i+a_j)^p=0.$$ 
This together with \eqref{eq:4.9} 	gives  
 $w_{ij}=0, \forall i,j=0,1,2,...,n$,
  	which implies that matrix $W$ is of the following type
  	\begin{equation}
  	W=\left(\begin{array}{ccccccc}
  	&  &  & w_{0,n+1} & \cdots & w_{0,N-1} \\
  	& \text{{\huge{0}}} &  & \vdots & & \vdots \\
  	&  &  & w_{n,n+1} & \cdots & w_{n,N-1} \\
  	w_{0,n+1} & \cdots & w_{n,n+1} & w_{n+1,n+1} & \cdots & w_{n+1,N-1}
  	\\ \vdots & & \vdots & \vdots & & \vdots
  	\\w_{0,N-1} & \cdots & w_{n,N-1}  & w_{n+1,N-1} & \cdots & w_{N-1, N-1}
  	\end{array}\right).\label{eq:4.11}\end{equation}
Therefore for all $k \in \mathbb{Z}$,  \eqref{eq:4.11} gives
\begin{equation}\langle f_0,\overline{f}_k\rangle=0.\label{eq:4.12}\end{equation} Then we have $$N=2n+2$$ since  $\dim($span$\{f_k\}_{k\in \mathbb{Z}}\oplus\{\overline{f}_k\}_{k\in \mathbb{Z}})=2n+2$ by \eqref{eq:4.12}.	
Choose \begin{equation} U_1=\left(\begin{array}{ccccccc}
  	\frac{1}{\sqrt2} & 0 & 0 & \cdots & 0 & \\
  	\frac{\sqrt{-1}}{\sqrt2} & 0 & 0 & \cdots & 0 & \\
  	0 & \frac{1}{\sqrt2} & 0 & \cdots & 0 &  \\
  	0 & \frac{\sqrt{-1}}{\sqrt2} & 0 & \cdots & 0 & \\
  	0 & 0 & \frac{1}{\sqrt2} & \cdots & 0 & \text{{\huge{*}}} \\
  	0 & 0 & \frac{\sqrt{-1}}{\sqrt2} & \cdots & 0 & \\
  	\vdots & \vdots & \vdots & \ddots & \vdots & \\
  	0 & 0 & 0 & \cdots & \frac{1}{\sqrt2} & \\
  	0 & 0 & 0 & \cdots & \frac{\sqrt{-1}}{\sqrt2} &
  	\end{array}\right).\label{eq:4.13}\end{equation}
  	Let $C_i$ denote the $i$-th column of $U_1$.  For $1\leq i\leq n+1$,
  	all the elements of $C_i$ are zero expect the $(2i-1)$-th and
  	$(2i)$-th elements, which are $\frac{1}{\sqrt2}$ and
  	$\frac{\sqrt{-1}}{\sqrt 2}$ respectively.
  	From the above discussion, we have,  up to an isometry
  	of $G(2,N;\mathbb{R})$,
  	$\underline{\phi}=\overline{\underline{f}}_0\oplus
  	\underline{f}_0$ with
  	\begin{equation}f_0=U_1V^{(n)}_0=\frac{1}{\sqrt{2}}\left(\begin{array}{ccccccc}
  	e^{z-\overline{z}}\sqrt{r_0} \\e^{z-\overline{z}}\sqrt{-r_0} \\
  	e^{a_1z-\overline{a}_1\overline{z}}\sqrt{r_1} \\e^{a_1z-\overline{a}_1\overline{z}}\sqrt{-r_1} \\
  	\vdots  \\
  	e^{a_nz-\overline{a}_n\overline{z}}\sqrt{r_n}\\
  	e^{a_nz-\overline{a}_n\overline{z}}\sqrt{-r_n}
  	\end{array}\right).\label{eq:4.23}\end{equation}

  	{\bfseries Case II: $\forall i, \exists j  \ s.t. a_i+a_j = 0$.} 
	
	Firstly we notice that in this case for any $i$, there exists and only exists one $j$ such that $a_i+a_j = 0$. Hence \emph{$n$ must be odd}. Set $n=2m+1$. Then for any $i, j, k, l=0,1,...,m$, if $\{i,j\}\neq\{k,l\}$, 
  	\begin{equation}a_i+a_j\neq \pm(a_k+a_l), \ a_i+a_{j+m+1}\neq a_k+a_{l+m+1}, \ a_i+a_j\neq \pm(a_k+a_{l+m+1}). \label{eq:4.14}\end{equation} 
Observing  \eqref{eq:4.8} and using relation $a_i+a_{i+m+1}=0, \forall i=0,1,2,...,m$, we find  $\forall p \in \{0,1,2,...\}$,
$$ \begin{array}{lll}
0 & = & \sum_{i,j=0}^m[w_{ij}\sqrt{r_ir_j}(a_i+a_j)^p+w_{i+m+1,j+m+1}\sqrt{r_{i+m+1}r_{j+m+1}}(a_{i+m+1}+a_{j+m+1})^p]+ \\
& &\sum_{i,j=0, i\neq j}^mw_{i,j+m+1}\sqrt{r_ir_{j+m+1}}(a_i+a_{j+m+1})^p.\end{array}
$$
This together with \eqref{eq:4.14} 	gives  
$$w_{ij}=0, \  w_{i+m+1,j+m+1}=0, \forall i,j=0,1,2,...,m$$ and 
$$w_{i,j+m+1}=0, \forall i,j=0,1,2,...,m, i\neq j.$$
Then matrix $W$ is of the following type  	
\begin{equation} W=\left(\begin{array}{cccccccccc}
0 & \cdots & 0 & w_{0,m+1} & \cdots & 0  & w_{0,n+1} & \cdots & w_{0,N-1}\\
\cdots & \ddots & \cdots & \cdots & \ddots & \cdots  &\cdots & \ddots & \cdots\\
0 & \cdots & 0 & 0 & \cdots & w_{m,n}  & w_{m,n+1} & \cdots & w_{m,N-1}\\
w_{0,m+1} & \cdots & 0  & 0 & \cdots & 0  & w_{m+1,n+1} & \cdots & w_{m+1,N-1}\\
\cdots & \ddots & \cdots & \cdots & \ddots & \cdots  &\cdots & \ddots & \cdots\\
0 & \cdots & w_{m,n}  & 0 & \cdots & 0  & w_{n,n+1} & \cdots & w_{n,N-1}\\
w_{0,n+1} & \cdots & w_{m,n+1}  & w_{m+1,n+1} & \cdots & w_{n,n+1}  & w_{n+1,n+1} & \cdots & w_{n+1,N-1}\\
\cdots & \ddots & \cdots & \cdots & \ddots & \cdots  &\cdots & \ddots & \cdots\\
w_{0,N-1} & \cdots & w_{m,N-1}  & w_{m+1,N-1} & \cdots & w_{n,N-1}  & w_{n+1,N-1} & \cdots & w_{N-1,N-1}
\end{array}\right).\label{eq:4.15}\end{equation}
With it, for any nonnegative integers $i,j$,  direct computations give
\begin{equation} 
	\langle f_i, \overline{f}_j \rangle=[(-1)^i+(-1)^j](\sum_{k=0}^{m}a_k^{i+j}\sqrt{r_kr_{m+1+k}}w_{k,m+1+k}).
	\label{eq:4.20}\end{equation} 
	From it we have
	\begin{equation} 
	\langle f_0, \overline{f}_j \rangle=[1+(-1)^j](\sum_{k=0}^{m}a_k^{j}\sqrt{r_kr_{m+1+k}}w_{k,m+1+k})
	\label{eq:4.21}\end{equation} and
\begin{equation} 
\sum_{k=0}^{m}\sqrt{r_kr_{m+1+k}}w_{k,m+1+k}=0.
\label{eq:4.22}\end{equation}

Suppose $n=N-1$. Then $|w_{i,i+m+1}|=1, i=0,1,...,m$. In this case, choose $\sqrt{2}U$ of following type
\begin{equation}\left(\begin{array}{cccccccccc}
		\sqrt{w_{0,m+1}} & 0 & \cdots & 0 & \sqrt{w_{0,m+1}} & 0  &  \cdots & 0\\
		0 & \sqrt{w_{1,m+2}} & \cdots & 0 & 0 & \sqrt{w_{1,m+2}}  &\cdots & 0 \\
		\vdots & \vdots & \vdots & \vdots & \vdots & \vdots  &\vdots & \vdots \\
		0 & 0 & \cdots & \sqrt{w_{mn}} & 0  & 0 & \cdots & \sqrt{w_{mn}}\\
		0 & 0 & \cdots & \sqrt{-w_{mn}} & 0  & 0 & \cdots & -\sqrt{-w_{mn}}\\
		\vdots & \vdots & \vdots & \vdots & \vdots & \vdots  &\vdots & \vdots \\
		0 & \sqrt{-w_{1,m+2}} & \cdots & 0 & 0 & -\sqrt{-w_{1,m+2}}  &\cdots & 0 \\
		\sqrt{-w_{0,m+1}} & 0 & \cdots & 0 & -\sqrt{-w_{0,m+1}} & 0  &  \cdots & 0
	\end{array}\right)\label{eq:4.16}\end{equation}
   	and therefore
\begin{equation}f_0=UV^{(n)}_0=\frac{1}{\sqrt{2}}\left(\begin{array}{ccccccc}
\sqrt{w_{0,m+1}}(e^{z-\overline{z}}\sqrt{r_0}+e^{\overline{z}-z}\sqrt{r_{m+1}})
 \\
 \sqrt{w_{1,m+2}}(e^{a_1z-\overline{a}_1\overline{z}}\sqrt{r_1}+e^{\overline{a}_1\overline{z}-a_1z}\sqrt{r_{m+2}})
\\
\vdots  
\\
\sqrt{w_{mn}}(e^{a_mz-\overline{a}_m\overline{z}}\sqrt{r_m}+e^{\overline{a}_m\overline{z}-a_mz}\sqrt{r_{n}})
\\
\sqrt{-w_{mn}}(e^{a_mz-\overline{a}_m\overline{z}}\sqrt{r_m}-e^{\overline{a}_m\overline{z}-a_mz}\sqrt{r_{n}})
\\
\vdots
\\
\sqrt{-w_{1,m+2}}(e^{a_1z-\overline{a}_1\overline{z}}\sqrt{r_1}-e^{\overline{a}_1\overline{z}-a_1z}\sqrt{r_{m+2}})
\\
\sqrt{-w_{0,m+1}}(e^{z-\overline{z}}\sqrt{r_0}-e^{\overline{z}-z}\sqrt{r_{m+1}})
\end{array}\right).\label{eq:4.50}\end{equation}

{\bfseries Case III: $\exists i, \forall j  \ s.t. a_i+a_j \neq 0$ and $\exists k, l \ s.t. a_k+a_l=0$.} 

 In this case, suppose $a_{i_\alpha}+a_{j_\alpha}=0, \ 0\leq i_\alpha<j_\alpha\leq n$ for $\alpha=1,...,s$ and for $\beta=2s+1,...,n+1, \forall j, a_{i_\beta}+a_{j}\neq 0, 0\leq i_\beta \leq n$. Then except for $w_{i_1j_1}, w_{i_2j_2},..., w_{i_sj_s}$ we have 
$w_{ij}=0, i, j=0,1,2,...,n$, and \eqref{eq:4.8} becomes
\begin{equation}
 w_{i_1j_1}\sqrt{r_{i_1}r_{j_1}}+w_{i_2j_2}\sqrt{r_{i_2}r_{j_2}}+\cdots+w_{i_sj_s}\sqrt{r_{i_s}r_{j_s}}=0.\label{eq:4.51}\end{equation}

 Next we shall illustrate the discussion of cases (I), (II) and (III) for the type of matrix $W$ in order to give classifications of linear full totally real flat minimal immersions of $\mathbb{C}$ in $Q_2, Q_3$ and $Q_4$.

  \begin{prop}
  	For any linearly full totally real flat immersion  $f: \mathbb{C} \rightarrow Q_2\subseteq \mathbb{C}P^3$, if it is minimal in both $Q_2$ and $\mathbb{C}P^3$, then up to $SO(4)$ equivalence, \begin{equation}f(z)=\left[\left(\begin{array}{ccccccc}
  	e^{z-\overline{z}}+e^{\overline{z}-z}
  	\\
  	\sqrt{-1}(e^{\sqrt{-1}(z+\overline{z})}+e^{-\sqrt{-1}(z+\overline{z})})
  	\\
  	e^{\sqrt{-1}(z+\overline{z})}-e^{-\sqrt{-1}(z+\overline{z})}
  	\\
  	\sqrt{-1}(e^{z-\overline{z}}-e^{\overline{z}-z})
  	\end{array}\right)\right].\label{eq:4.17}\end{equation}
  \end{prop}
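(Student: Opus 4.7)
The plan is to specialize the machinery of Section 4 to $N=4$. By Theorems 1 and 2 one writes $\underline{\phi}=\overline{\underline{f}}_0\oplus\underline{f}_0$ with $f_0=UV_0^{(n)}$, $U\in U(4)$ and $n\le 3$; the totally real condition on $\phi$ is captured by the symmetric matrix $W=U^T U$, which falls into one of the three cases (I, II, III) from Section 4 according to how many pairs $a_i+a_j=0$ occur among the exponents of $V_0^{(n)}$. I will show that the only admissible configuration for $N=4$ is Case II with $n=3$, and then read off the explicit form.

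First I would rule out Cases I and III. Case I forces $N=2(n+1)$, hence $n=1$; subtracting the second from the third equation of \eqref{eq:4.300} would give $a_1(a_1-1)r_1=0$, contradicting $r_1>0$ and $a_1\in S^1\setminus\{1\}$. In Case III with $s$ pairings, \eqref{eq:4.51} is a single linear relation on $w_{i_1 j_1},\ldots,w_{i_s j_s}$. If $s=1$, then $w_{i_1 j_1}=0$, so the inner $(n+1)\times(n+1)$ block of $W$ vanishes; comparing $(W^*W)_{i_1 i_1}$, $(W^*W)_{j_1 j_1}$ and $(W^*W)_{i_1 j_1}$ against $W^*W=I$, with only one outer index available when $n=2$ (and none when $n=3$), should produce a contradiction. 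If instead $s\ge 2$, then $n+1\ge 2s\ge 4$, forcing $n=3$ and $s=2$; but then all indices are paired, i.e.\ Case II rather than Case III.

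Next I would handle Case II. Case II requires $n=2m+1$ odd; when $n=1$ the third equation of \eqref{eq:4.300} reads $r_0+r_1=0$, contradicting positivity, so $n=3$, $m=1$, and the pairings become $a_2=-a_0=-1$ and $a_3=-a_1$. Substituting into the second and third equations of \eqref{eq:4.300},
\begin{align*}
(r_0-r_2)+a_1(r_1-r_3)&=0,\\
(r_0+r_2)+a_1^2(r_1+r_3)&=0.
\end{align*}
The second line forces $a_1^2$ to be a negative real, hence $a_1=\sqrt{-1}$ and $a_3=-\sqrt{-1}$. The first line then separates into real and imaginary parts to yield $r_0=r_2$ and $r_1=r_3$, and combined with $\sum r_i=1$ this gives $r_0=r_1=r_2=r_3=\tfrac14$. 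Relation \eqref{eq:4.22} becomes $w_{02}+w_{13}=0$, and $W^*W=I$ forces $|w_{02}|=|w_{13}|=1$. Plugging $m=1$, these values, and $w_{13}=-w_{02}$ into \eqref{eq:4.50}, the common phase of $w_{02}$ factors out of every entry of $f_0$, so up to projective scaling and the remaining $SO(4)$-freedom one may take $w_{02}=1$ and $w_{13}=-1$; the resulting formula should simplify to \eqref{eq:4.17}.

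The hardest step will be ruling out Case III with $s=1$: it requires combining \eqref{eq:4.51} with the unitarity constraint $W^*W=I$ on the (very small) outer block of $W$ available when $N=4$. The observation that $s\ge 2$ collapses back to Case II when $n\le 3$ is what makes the $N=4$ situation so rigid; once that is in hand, everything else is a routine computation inside the framework of Section 4.
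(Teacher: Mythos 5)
Your proposal is correct and takes essentially the same route as the paper: write $f_0=UV_0^{(n)}$ via Theorems 1 and 2, eliminate Cases I and III using \eqref{eq:4.300}, \eqref{eq:4.51} and the unitarity of $W$ (the paper likewise kills Case III by noting the vanishing inner block forces zero columns in $W$), and then compute Case II with $n=3$, $m=1$. One minor repair: $r_0=r_2$, $r_1=r_3$ and $\sum r_i=1$ alone only give $r_0+r_1=\tfrac12$; to conclude $r_i=\tfrac14$ you also need $r_0+r_2=r_1+r_3$, which is exactly your second displayed equation after substituting $a_1^2=-1$.
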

  \begin{proof}
  	Set $f=UV^{(n)}_0$ for some $U \in U(4)$, then from Theorem 2 we notice that $2\leq n\leq 3$, which shows  only cases II and III may happen here.
  	
  	If $f$ belongs to case II, then $m=1, \ n=3$ holds. In this case, from our analysis above, $$a_0=1, \ a_2=-1, \ a_1+a_3=0, \ W=\left(\begin{array}{cccccccccc}
  	0 & 0 &   w_{02} & 0\\
  	0 & 0 &   0 & w_{13}\\
  	w_{02} &  0 & 0 & 0\\
  	0 & w_{13}  & 0 & 0
  	\end{array}\right)$$ 
  	with $|w_{02}|=1, \ |w_{13}|=1$. This together with \eqref{eq:4.300} and \eqref{eq:4.22}
  implies $$r_0=r_1=r_2=r_3=\frac{1}{4}, \ a_1=\sqrt{-1}, \ a_3=-\sqrt{-1}, \ w_{02}+w_{13}=0.$$
  	Then we set $ w_{02}=1, \ w_{13}=-1$, and choose $U=\frac{1}{\sqrt{2}}\left(\begin{array}{cccccccccc}
  1 & 0 & 1 & 0 \\
  0 & \sqrt{-1} & 0 & \sqrt{-1} \\
 0&1 & 0 & -1 \\ \sqrt{-1} & 0 & -\sqrt{-1} & 0
  \end{array}\right)$, immersion $f$ is the one given in \eqref{eq:4.17}, which is in fact the Clifford solution (see in section 5).
  
  If $f$ belongs to case III. Since there exists some $i_{\beta}$ s.t. $w_{i_{\beta}k}=0$ for $k=0,1,...,n$. Then $n = 2$, $s=1$ and 
  for $k=0,1,2,3$, 
  $$w_{i_1j_1}=0, \quad
  |w_{3k}|=\left\{\begin{array}{rcl}1, & & {k=i_{\beta}}\\0, & & {k\neq i_{\beta}}.\end{array}\right.$$ 
  The first equation is an immediately result of \eqref{eq:4.51}. Therefore elements of  both the ($i_1+1$)-th and ($j_1+1$)-th columns of $W$ are zeros, which contradicts $W \in U(4)$.  In summary, we finish our proof.
  	\end{proof}

  \begin{prop}
  	Let $f: \mathbb{C} \rightarrow Q_3$ be a linearly full totally real flat minimal immersion. Then $f$ is not minimal in $\mathbb{C}P^4$.
  \end{prop}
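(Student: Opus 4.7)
Arguing by contradiction, suppose such an $f$ exists. Viewing $f$ simultaneously as a totally real flat harmonic map into $\mathbb{C}P^{4}$, Theorem 2 applied in $\mathbb{C}P^{n}$ for some $n \le 4$ gives, after padding with zeros, a representation $f = UV_{0}^{(n)}$ with $U \in U(5)$ and $n \in \{2,3,4\}$ (the lower bound $n \ge 2$ comes from the two orthogonality conditions in \eqref{eq:4.300} being incompatible for $n\le 1$). Setting $W = U^{T}U$, a symmetric unitary matrix satisfying \eqref{eq:4.8}, I would then carry out an exhaustive case analysis on $n$ together with the three cases (I), (II), (III) of Section 4 governing the structure of $W$.

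Cases (I) and the easy subcases of (III) are handled by a counting argument. Case (I) forces $N = 2n+2$, which is incompatible with $N = 5$ being odd, so it is ruled out for every $n$. For case (III), \eqref{eq:4.51} is a single linear relation among the $s$ possibly-nonzero entries $w_{i_{\alpha}j_{\alpha}}$ in the upper $(n+1)\times(n+1)$ block of $W$. When $n\in\{2,3\}$ one has $s = 1$, so \eqref{eq:4.51} gives $w_{i_{1}j_{1}} = 0$ and thus the entire upper $(n+1)\times(n+1)$ block vanishes; the first $n+1$ columns of $W$ are then supported on only the last $N-n-1 \in \{1,2\}$ rows, too few dimensions to carry $n+1 \ge 3$ orthonormal vectors. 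When $n = 4$, either $s = 1$, in which case \eqref{eq:4.51} forces $W = 0$, or $s = 2$, in which case the unique unpaired index yields an entire zero row and column of $W$; both contradict $W^{*}W = I$.

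The main obstacle is case (II). It requires $n$ to be odd, so only $n = 3$, $m=1$ remains. With $a_{0} = 1$, $a_{2} = -1$, $a_{3} = -a_{1}$, the three relations \eqref{eq:4.300} determine $a_{1} = \sqrt{-1}$, $a_{3} = -\sqrt{-1}$, $r_{0}=r_{1}=r_{2}=r_{3}=1/4$, and the constraint \eqref{eq:4.22} reduces to $w_{02}+w_{13}=0$. The key computation is then to impose $W^{*}W = I$ on the $5\times 5$ matrix $W$ of shape \eqref{eq:4.15}: comparison of the norms of the columns $C_{0},C_{1},C_{2},C_{3}$ forces $|w_{04}|=|w_{14}|=|w_{24}|=|w_{34}|$, and the off-diagonal relation $C_{0}\cdot C_{1}=\overline{w_{04}}w_{14}=0$ then makes this common value vanish, so that $W=\mathrm{diag}(W',w_{44})$ is block diagonal. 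Choosing a block-diagonal representative $U\in G_{W}$ (justified by properties (i), (ii) of $G_{W}$ together with $SO(N)$-equivalence in $G(2,N;\mathbb{R})$), the last coordinate of $f = UV_{0}^{(3)}$ is identically zero, so the image of $\phi=\overline{f}\oplus f$ is contained in a real coordinate hyperplane of $\mathbb{C}^{5}$, contradicting linear fullness of $\phi$ in $G(2,5;\mathbb{R})$. The delicate column-orthogonality bookkeeping forcing $W$ into this block-diagonal shape is what I expect to be the main obstacle.
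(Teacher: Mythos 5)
Your proposal is correct and follows essentially the same route as the paper: represent $f=UV_0^{(n)}$ via Theorem 2, rule out Case I by the parity of $N=2n+2$ versus $N=5$, kill Case III by showing the vanishing of the upper block of $W$ leaves too few rows to support $n+1$ orthonormal columns, and in Case II ($n=3$, $m=1$) use unitarity of $W$ to force the block-diagonal form $\mathrm{diag}(W',w_{44})$, contradicting linear fullness. You merely spell out some steps the paper leaves terse (e.g.\ the explicit column-orthogonality bookkeeping and the choice of a block-diagonal representative in $G_W$), so no further comparison is needed.
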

  \begin{proof}
  	Suppose there exists some linearly full totally real flat immersion $f: \mathbb{C} \rightarrow Q_3 \subseteq \mathbb{C}P^4$, which is   minimal in both $Q_3$ and $\mathbb{C}P^4$. Then from Theorem 2, 
  $f=UV^{(n)}_0$ for some $U \in U(5), \ 2\leq n\leq 4$, and only cases II and III may happen here.
  	
  	If $f$ belongs to case II, then $m=1, \ n=3$ holds. In this case, $W$ can be rewritten as $$W=\left(\begin{array}{cccccccccc}
  	0 & 0 &   w_{02} & 0 & 0\\
  	0 & 0 &  0 &  w_{13} & 0\\
  	w_{02} & 0 & 0 & 0 & 0\\
  	0 & w_{13} &   0 & 0 & 0 \\
  	0 & 0 &  0 & 0 & w_{44} 
  	\end{array}\right)$$  by \eqref{eq:4.15} and
  	using the property of unitary matirx, which implies that immersion $f$ is not linearly full.
  	
  	If $f$ belongs to case III. Since there exists some $i_{\beta}$ s.t. $w_{i_{\beta}k}=0$ for $k=0,1,...,n$. Then $n \leq 3$ and therefore $s=1$. So from  \eqref{eq:4.51} we get $w_{ij}=0$ for any $i,j=0,1,...,n$, which is impossible.
This completes the proof.
  \end{proof}

\begin{prop}
	There exist continuous families of noncongruent linearly full totally real flat minimal immersions from $\mathbb{C}$ to $Q_4$, which are also minimal in $\mathbb{C}P^5$.
\end{prop}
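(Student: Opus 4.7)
The cleanest route to a continuous family is via Case II with $N=6$, $n=5$, and $m=2$, which is the most flexible of the three cases analyzed above. Set $a_0=1$ and choose $a_1=e^{\sqrt{-1}\theta_1}$, $a_2=e^{\sqrt{-1}\theta_2}$ with $0<\theta_1<\theta_2<\pi$ as free parameters, so that Case II forces $a_3=-1$, $a_4=-a_1$, $a_5=-a_2$. Under the symmetric ansatz $r_{k+3}=r_k$ for $k=0,1,2$, the first moment condition in \eqref{eq:4.300} becomes automatic, and the remaining two collapse to $r_0+r_1+r_2=\tfrac12$ together with $r_0+a_1^2r_1+a_2^2r_2=0$. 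Separating real and imaginary parts of the latter yields a non-degenerate linear system in $(r_0,r_1,r_2)$ whose solution is rational in $\sin\theta_i$ and $\cos\theta_i$; restricting to the open subset $\Omega\subset(0,\pi)^2$ on which all $r_i$ are strictly positive and on which the non-resonance conditions \eqref{eq:4.9}, \eqref{eq:4.14} hold gives the parameter region.

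With these data fixed, I would build a compatible $W$ of the form \eqref{eq:4.15}. Because $n=N-1$, the matrix $W$ is concentrated on the anti-diagonal; the conditions $W^{*}W=I$ and $W=W^{T}$ then force $|w_{i,i+3}|=1$ for $i=0,1,2$, while the residual constraint \eqref{eq:4.22} reduces to $r_0w_{03}+r_1w_{14}+r_2w_{25}=0$, which is solvable on the unit circles exactly when $(r_0,r_1,r_2)$ obeys the three triangle inequalities (an open condition, possibly shrinking $\Omega$). Taking any such $(w_{03},w_{14},w_{25})$, producing $U\in G_W$ via the explicit template \eqref{eq:4.16}, and setting $f_0=UV_0^{(5)}$ and $\phi=\overline{f}_0\oplus f_0$, the map $f_0$ is a linearly full totally real flat harmonic map into $\mathbb{C}P^5$ by Theorem 2. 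The image $\psi=[f_0]$ lies in $Q_4$ because \eqref{eq:4.7} collapses to \eqref{eq:4.22}; and by Proposition 2, minimality of $\psi$ in $\mathbb{C}P^5$ reduces to $\langle\xi,\overline{\eta}\rangle=\langle f_1,\overline{f}_{-1}\rangle=0$, which by \eqref{eq:4.20} with $(i,j)=(1,-1)$ equals $-2\sum_{k}\sqrt{r_{k}r_{m+1+k}}\,w_{k,m+1+k}$ and therefore vanishes by \eqref{eq:4.22}.

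For noncongruence, the unordered multiset of frequencies $\{\pm 1,\pm a_1,\pm a_2\}$ is a congruence invariant of $\phi$: it is encoded in the spectrum of the diagonal matrix $A=\mathrm{diag}(1,a_1,a_2,-1,-a_1,-a_2)$ that drives the harmonic sequence \eqref{eq:3.18}, and is read off from the cubic Hopf differential $\Psi$ of \eqref{eq:2.9} together with its higher-order analogues along \eqref{eq:3.16}. Hence distinct pairs $(\theta_1,\theta_2)\in\Omega$, modulo the discrete symmetries $\theta_i\mapsto 2\pi-\theta_i$ and $\theta_1\leftrightarrow\theta_2$, yield pairwise noncongruent immersions, producing a genuine two-parameter family. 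The principal obstacle I expect is precisely this final step: upgrading \emph{the spectrum of $A$ determines $\phi$} into a rigorous statement requires analyzing how the $SO(6)$-action $U\mapsto OU$ on $G_W$ alters the exponential expansion \eqref{eq:4.1}, and using linear independence over $\mathbb{C}$ of the exponentials $e^{a_iz-\overline{a}_i\overline{z}}$ to conclude that $O$ can only permute columns of $V_0^{(5)}$ according to the above discrete symmetries; the remaining construction steps are essentially bookkeeping.
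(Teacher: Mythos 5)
Your construction is correct and is in substance the paper's own Case II with $m=2$, $n=5$ (the family displayed in \eqref{eq:4.70}), so the approach is essentially the same; the symmetric ansatz $r_{k+3}=r_k$ is an unnecessary but harmless restriction that still leaves a two-parameter family (nonempty: the point $\theta_1=\tfrac{\pi}{3}$, $\theta_2=\tfrac{2\pi}{3}$ gives $r_0=r_1=r_2=\tfrac16$, which satisfies your triangle inequalities, and an open neighbourhood of it survives all the positivity constraints). Two remarks on where you differ from the paper. First, the paper's proof is really a near-classification: it runs through Cases I, II and III for $N=6$ and exhibits \eqref{eq:4.53}, \eqref{eq:4.70} and \eqref{eq:4.60}, attaching the phrase ``continuous families'' mainly to the Case III example; for the purely existential statement of the proposition your single Case II branch suffices, and your verification that $[f_0]\subseteq Q_4$ and $\langle f_1,\overline f_{-1}\rangle=0$ both reduce to \eqref{eq:4.22} is exactly right (note only that \eqref{eq:4.20} is stated for nonnegative indices, but since $\langle f_i,\overline f_j\rangle=V_0^TA^{i+j}WV_0$ depends only on $i+j$ it extends verbatim to $(i,j)=(1,-1)$). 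Second, and more importantly, your noncongruence argument is \emph{more} substantive than the paper's, which simply asserts noncongruence; your observation that the frequency multiset $\{\pm1,\pm a_1,\pm a_2\}$ is a congruence invariant -- because a congruence $f_0\mapsto Of_0$ composed with a flat reparametrization $z\mapsto e^{\sqrt{-1}\beta}z+c$ or $z\mapsto e^{\sqrt{-1}\beta}\overline z+c$ can only rotate or conjugate the exponents, and linear independence of the functions $e^{a_iz-\overline a_i\overline z}$ pins down which exponents occur -- is the right mechanism, and varying $(\theta_1,\theta_2)$ moves this invariant continuously modulo the finite symmetry group you identify. You correctly flag that this last step is the one requiring the most care; it is also the step the paper omits entirely.
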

\begin{proof}
	Let $f: \mathbb{C} \rightarrow Q_4 \subseteq \mathbb{C}P^5$ be any linearly full totally real flat  immersion, which is minimal in both $Q_4$ and $\mathbb{C}P^5$. Here we need to analyze  $f$ by cases I, II and III respectively.

	If $f$ belongs to case I. Then $n=2$ holds. In this case,  by \eqref{eq:4.300}, the Clifford solution (see in section 5) is, up to congruence, the only totally real flat minimal immersion when $n=2$. That is $$r_0=r_1=r_2=\frac{1}{3}, \ a_0=1, \ a_1=e^{\frac{2\pi\sqrt{-1}}{3}}, \ a_2=e^{\frac{4\pi\sqrt{-1}}{3}}$$
	and from \eqref{eq:4.23}
		\begin{equation}f=\frac{1}{\sqrt{6}}\left(\begin{array}{ccccccc}
	e^{z-\overline{z}} \\\sqrt{-1}e^{z-\overline{z}} \\
	e^{a_1z-\overline{a}_1\overline{z}} \\\sqrt{-1}e^{a_1z-\overline{a}_1\overline{z}} \\
	e^{a_2z-\overline{a}_2\overline{z}}\\
	\sqrt{-1}e^{a_2z-\overline{a}_2\overline{z}}
	\end{array}\right)\label{eq:4.53}\end{equation}
with $a_1=e^{\frac{2\pi\sqrt{-1}}{3}}, \ a_2=e^{\frac{4\pi\sqrt{-1}}{3}}$.	
	
If $f$ belongs to case II. Then we have $m=1, n=3$ or $m=2, n=5$. When  $m=2, n=5$, by \eqref{eq:4.15} and \eqref{eq:4.22} we have $\sqrt{r_0r_3}w_{03}+ \sqrt{r_1r_4}w_{14}+\sqrt{r_2r_5}w_{25}=0$, 
$$W=\left(\begin{array}{cccccccccc}
0 & 0 & 0 &  w_{03} & 0 & 0\\
0 & 0 &  0 & 0 &  w_{14} & 0\\
0 & 0 &  0 & 0 &  0 & w_{25}\\
w_{03} & 0 & 0 & 0 & 0 & 0\\
0 & w_{14} &   0 & 0 & 0 & 0\\
0 & 0 &  w_{25} & 0 & 0 & 0
\end{array}\right)$$
and 
\begin{equation}f=\frac{1}{\sqrt{2}}\left(\begin{array}{ccccccc}
\sqrt{w_{03}}(e^{z-\overline{z}}\sqrt{r_0}+e^{\overline{z}-z}\sqrt{r_{3}})
\\
\sqrt{w_{14}}(e^{a_1z-\overline{a}_1\overline{z}}\sqrt{r_1}+e^{\overline{a}_1\overline{z}-a_1z}\sqrt{r_{4}})  
\\
\sqrt{w_{25}}(e^{a_2z-\overline{a}_2\overline{z}}\sqrt{r_2}+e^{\overline{a}_2\overline{z}-a_2z}\sqrt{r_{5}})
\\
\sqrt{-w_{25}}(e^{a_2z-\overline{a}_2\overline{z}}\sqrt{r_2}-e^{\overline{a}_2\overline{z}-a_2z}\sqrt{r_{5}})
\\
\sqrt{-w_{14}}(e^{a_1z-\overline{a}_1\overline{z}}\sqrt{r_1}-e^{\overline{a}_1\overline{z}-a_1z}\sqrt{r_{4}})
\\
\sqrt{-w_{03}}(e^{z-\overline{z}}\sqrt{r_0}-e^{\overline{z}-z}\sqrt{r_{3}})
\end{array}\right).\label{eq:4.70}\end{equation}
Otherwise when $m=1, n=3$, using the property of unitary matrix, $W$ can be expressed by 
$$W=\left(\begin{array}{cccccccccc}
0 & 0 & w_{02} &  0 & w_{04} & w_{05}\\
0 & 0 &  0 & w_{13} &  w_{14} & w_{15}\\
w_{02} & 0 &  0 & 0 &  w_{24} & w_{25}\\
0 & w_{13} & 0 & 0 & w_{34} & w_{35}\\
w_{04} & w_{14} &   w_{24} & w_{34} & w_{44} & w_{45}\\
w_{05} & w_{15} &   w_{25} & w_{35} & w_{45} & w_{55}
\end{array}\right)
=\left(\begin{array}{cccccccccc}
0 & 0 & w_{02} &  0 & 0 & 0\\
0 & 0 &  0 & w_{13} &  0 & 0\\
w_{02} & 0 &  0 & 0 &  0 & 0\\
0 & w_{13} & 0 & 0 & 0 & 0\\
0 & 0 &   0 & 0 & w_{44} & w_{45}\\
0 & 0 &   0 & 0 & w_{45} & w_{55}
\end{array}\right),$$ which means that $f$ is not linearly full in this condition.

If $f$ belongs to case III. Then we have $2s+1\leq n+1 < N=6$, which implies
$s=1, n=2$ or $s=1, n=3$ or $s=1, n=4$ or $s=2, n=4$. When $s=1$, \eqref{eq:4.51} shows $w_{i_1j_1}=0$, then only $s=1, n=2$ holds and $f$ is the one in \eqref{eq:4.53}, which is impossible since there does not exist $i_1, j_1$ such that $a_{i_1}+a_{j_1}=0$. When $s=2, n=4$, there are $15$ conbinations for $i_1, j_1, i_2, j_2 \in {0,1,2,3,4}$. In fact, for each conbination, there exists continuous families of noncongruent $f$. Take $\{i_1, j_1, i_2, j_2\}=\{0, 2, 1, 3\}$ for example, which gives
$$W=\left(\begin{array}{cccccccccc}
0 & 0 & w_{02} &  0 & 0 & 0\\
0 & 0 &  0 & w_{13} &  0 & 0\\
w_{02} & 0 &  0 & 0 &  0 & 0\\
0 & w_{13} & 0 & 0 & 0 & 0\\
0 & 0 &   0 & 0 & 0 & w_{45}\\
0 & 0 &   0 & 0 & w_{45} & 0
\end{array}\right).$$
From \eqref{eq:4.51}, $w_{02}+w_{13}=0$ holds. Without loss of generality, it can be given by
$$W=\left(\begin{array}{cccccccccc}
0 & 0 & 1 &  0 & 0 & 0\\
0 & 0 &  0 & -1 &  0 & 0\\
1 & 0 &  0 & 0 &  0 & 0\\
0 & -1 & 0 & 0 & 0 & 0\\
0 & 0 &   0 & 0 & 0 & t\\
0 & 0 &   0 & 0 & t & 0
\end{array}\right),$$ and 
\begin{equation}f=\frac{1}{\sqrt{2}}\left(\begin{array}{ccccccc}
e^{z-\overline{z}}\sqrt{r_0}+e^{\overline{z}-z}\sqrt{r_{2}}
\\
e^{z-\overline{z}}\sqrt{-r_0}-e^{\overline{z}-z}\sqrt{-r_{2}}
\\
e^{a_1z-\overline{a}_1\overline{z}}\sqrt{r_1}-e^{\overline{a}_1\overline{z}-a_1z}\sqrt{r_{3}}
\\
e^{a_1z-\overline{a}_1\overline{z}}\sqrt{-r_1}+e^{\overline{a}_1\overline{z}-a_1z}\sqrt{-r_{3}}
\\
\sqrt{t}e^{a_4z-\overline{a}_4\overline{z}}\sqrt{r_4}
\\
\sqrt{-t}e^{a_4z-\overline{a}_4\overline{z}}\sqrt{r_4}
\end{array}\right),\label{eq:4.60}\end{equation}
where $t$ is parameters. 	Summing up, $f$ is \eqref{eq:4.53},  \eqref{eq:4.70} or some types of \eqref{eq:4.60} by changing combinations.
\end{proof}

 \section{Clifford Solutions}   
 \begin{definition}
 	\emph{For any $n \geq 2$, a solution to \eqref{eq:4.1} is given by $r_i=\frac{1}{n+1}$ and $a_i=e^{\frac{2\pi \sqrt{-1}i}{n+1}}$ for $i=0,1,...,n$. We will call this the} Clifford solution \emph{to \eqref{eq:4.1}. For such Clifford solution, if there exists an unitary matrix $U\in U(N)$ s.t. $\langle UV^{(n)}_0, \overline{UV}^{(n)}_0\rangle=0$, then we will also call $\underline{UV}^{(n)}_0:\mathbb{C} \rightarrow Q_{N-2}$ a} Clifford solution\emph{.}
  \end{definition}   
 It follows from Proposition 4.1 in \cite{[19]} that,  if $\underline{V}^{(n)}_0$  is a Clifford solution, then, for any $z\in \mathbb{C}$, the vectors $V^{(n)}_0(z), AV^{(n)}_0(z), ..., A^nV^{(n)}_0(z)$ form a unitary set in $\mathbb{C}^{n+1}$, and the harmonic sequence of $\underline{V}^{(n)}_0$ is given by $\underline{V}^{(n)}_i$, where $V^{(n)}_i=A^iV^{(n)}_0$ for any $i\in \mathbb{Z}$. By using $A^{n+1}=I$, we see that the sequence is cyclic (cf. \cite{[21]} for details and additional references). Moreover, up to congruence, it defines the unique totally real flat harmonic map  $\underline{V}^{(n)}_0:\mathbb{C} \rightarrow \mathbb{C}P^n$ that is cyclic.
    
Then by Definition 1, \eqref{eq:4.17} and \eqref{eq:4.53}  are two  Clifford solutions. Furthermore Proposition 5 tells us that, the Clifford solution  \eqref{eq:4.17} is, up to congruence, the only linearly full totally real flat minimal immersion both in $Q_2$ and   $\mathbb{C}P^3$.
    
 Let $\phi:\mathbb{C} \rightarrow G(2,N;\mathbb{R})$ be a linearly full totally real flat minimal surface with induced metric $\phi^*ds^2=4dzd\overline{z}$. If its corresponding immersion $\psi$ in $\mathbb{C}P^{N-1}$ is also minimal, then we can find some $U\in U(N)$ such that $f_0=UV^{(n)}_0$ and $\phi=\overline{f}_0 \oplus f_0$.  From now on, we shall consider when the solutions are acturally Clifford solutions, that is, 
 $f_0=UV^{(n)}_0$ is a Clifford solution.  
 
 To give an explicit characterization of these Clifford solutions, we need to analyze harmonic maps $\underline{f}_0:\mathbb{C} \rightarrow Q_{N-2}$ by cases I, II and III given in section 4 respectively. To do this, the most important step is to discuss matrix $W$ by using the fact that the harmonic sequence $\underline{f}_i=\underline{UV}^{(n)}_i$ derived by $\underline{f}_0$  is cyclic. That is for any $i,j=0,1,...,n$, 
 \begin{equation}\langle f_i, f_j \rangle=\delta_{ij}, \quad f_{n+1+i}=f_i.  \label{eq:5.1}\end{equation}

{\bfseries Case I: $\forall i,j=0,1,2,...,n, a_i+a_j\neq 0$.} 

Since $\underline{f}_0$ is a Clifford solution, then $r_0=r_1=\cdots=r_n=\frac{1}{n+1}$. According to the discussion in section 4 and \eqref{eq:4.23}, $2n+2=N$.
Up to $SO(N)$ equivalence, $f_0$ can only be expressed by  
\begin{equation}f_0=U_1V^{(n)}_0=\frac{1}{\sqrt{N}}\left(\begin{array}{ccccccc}
e^{z-\overline{z}} \\e^{z-\overline{z}}\sqrt{-1} \\
e^{a_1z-\overline{a}_1\overline{z}} \\e^{a_1z-\overline{a}_1\overline{z}}\sqrt{-1} \\
\vdots  \\
e^{a_nz-\overline{a}_n\overline{z}}\\
e^{a_nz-\overline{a}_n\overline{z}}\sqrt{-1}
\end{array}\right),\label{eq:5.2}\end{equation}
where $a_i=e^{\frac{2\pi \sqrt{-1}i}{n+1}}, i=0,1,2,...,n$ are the $n+1$-roots of unity.

 {\bfseries Case II: $\forall i, \exists j  \ s.t. a_i+a_j = 0$.} 
 
 In this case we have already known that $n$ is odd. Set $n=2m+1$.
Since $\underline{f}_0$ is a Clifford solution, then, for any $z\in \mathbb{C}$, vectors $\overline{f}_0(z), \overline{f}_1(z),..., \overline{f}_n(z)$ and $f_0(z), f_1(z),..., f_n(z)$ form two unitary sets in 
$\mathbb{C}^{n+1}$ separately. Notice that $\mathbb{C}^{N}$ is spanned by
$\overline{f}_0(z), \overline{f}_1(z),..., \overline{f}_n(z), f_0(z), f_1(z),..., f_n(z)$. Set  $(2n+2)\times (2n+2)$-matrix 
$$R=\left(\begin{array}{cccccccccc}
E & B \\
C & D \end{array}\right),$$  where  $E=(e_{ij}),  B=(b_{ij}), C=(c_{ij}), D=(d_{ij})$ are four $(n+1)\times (n+1)$-matrices with $e_{ij}=\langle \overline{f}_i, \overline{f}_j \rangle , 
\ b_{ij}=\langle \overline{f}_i, f_j \rangle, \ c_{ij}=\langle f_i,\overline{f}_j\rangle, \ d_{ij}=\langle f_i,f_j\rangle$ for $i,j=0, 1,...,n$. 
Then from \eqref{eq:5.1}, $E=D=I$ holds, i.e., they are both $(n+1)\times (n+1)$ identity matrices. We also have $C=\overline{B}$. Then 
$$R=\left(\begin{array}{cccccccccc}
I & B \\
\overline{B} & I \end{array}\right).$$
Of importance is the rank of $R$, which can be interpreted as the exact value of $N$.

\emph{Claim. $N=n+1$ or $N=2(n+1)$.}
\begin{proof}
	$$N=rank(I-\overline{B}B) + n+1$$ can be obtained 
from relation $R \left(\begin{array}{cccccccccc}
I & -B \\
0 & I \end{array}\right)=\left(\begin{array}{cccccccccc}
I & 0 \\
\overline{B} & I-\overline{B}B \end{array}\right)$. Since $\underline{f}_0$ is a Clifford solution, then by relations $a_i+a_{i+m+1}=0,  \forall i=0,1,...,m$ and $r_0=r_1=\cdots=r_n=\frac{1}{n+1}$,
\eqref{eq:4.20} \eqref{eq:4.21} and \eqref{eq:4.22} can be rewritten as 
\begin{equation} 
\langle f_i, \overline{f}_j \rangle=\frac{[(-1)^i+(-1)^j]}{n+1}(\sum_{k=0}^{m}a_k^{i+j}w_{k,m+1+k}), 
\label{eq:5.3}\end{equation}
\begin{equation}
\langle f_0, \overline{f}_j \rangle=\frac{[1+(-1)^j]}{n+1}(\sum_{k=0}^{m}a_k^{j}w_{k,m+1+k}),
\quad
\sum_{k=0}^{m}w_{k,m+1+k}=0.
\label{eq:5.4}\end{equation}
Then we get, for any $i,j$,  
$b_{ij}$ is a constant. Using this, we have following four facts.

(a) $b_{00}=b_{0,2m+2}=0$,

(b) $b_{ij}=0$ if $i+j$ is odd,

(c)  $b_{i+1,j}+b_{i,j+1}=0$,

(d) $b_{in}=-b_{0,i-1}$ if $i$ is odd.

Using (a)(b)(c)(d), it is not difficult  to check that $I-\overline{B}B$ is a circulant matrix. Let $g_{1k}$ denotes the element in the first row and $k$-th column, then 
\begin{align*}
g_{1k}=\begin{cases}
0,  \quad k=2,4,...,\\
1-(|b_{02}|^2+|b_{04}|^2+\cdots+|b_{0,2m}|^2), \quad k=1,\\
-(\overline{b}_{02}b_{0,k+1}+\overline{b}_{04}b_{0,k+3}+\cdots+\overline{b}_{0,2m-2}b_{0,k+2m-3}+\overline{b}_{0,2m}b_{0,k+2m-1}), \quad k=3,5,...,n. 
\end{cases}
\end{align*}
Then for circulant matrix,
$$\det(I-\overline{B}B)=F(a_0)F(a_1)...F(a_n),$$
where $$F(x)=g_{11}+g_{13}x^2+g_{15}x^4+\cdots+g_{1n}x^{n-1}.$$

If $\det(I-\overline{B}B)\neq 0$, then we have $rank(I-\overline{B}B)=n+1$ and thus derive $N=2(n+1)$. Otherwise if  $\det(I-\overline{B}B) = 0$. On one hand, there exists some $k\in {0,1,...,n}$ such that $F(a_k)=0$.  Therefore
\begin{align*}
g_{11}&:=-g_{13}a_k^2-g_{15}a_k^4-\cdots-g_{1n}a_k^{n-1}
\\&\leq|g_{13}|+|g_{15}|+\cdots+|g_{1n}|
\\&\leq \sum_{i,j=2,i\neq j}^{2m}|b_{0i}||b_{0j}|,
\end{align*}
which implies
\begin{equation}(|b_{02}|+|b_{04}|+\cdots+|b_{0,2m}|)^2 \geq 1.\label{eq:5.5}\end{equation}
On the other hand, $f_0, f_1,..., f_n$ form a unitary set in 
$\mathbb{C}^{n+1}$, which can be regarded as a fixed $(n+1)$-plane in $\mathbb{C}^{N}$. In other words, $e_{n+2},..., e_N$ can be choosen such that $f_0, f_1,..., f_n, e_{n+2},..., e_N$ form a unit orthogonal frame for $\mathbb{C}^{N}$. So we have
$$\overline{f}_0=b_{02}f_2+b_{04}f_4+\cdots+b_{0,2m}f_{2m}+\langle \overline{f}_0, e_{n+2} \rangle e_{n+2}+\cdots+\langle\overline{f}_0, e_{N} \rangle e_{N}.$$  Together with  relation $|f_i|=1, \forall i\in \mathbb{Z}$, we obtain
$$(|b_{02}|+|b_{04}|+\cdots+|b_{0,2m}|)^2 \leq 1.$$
Comparing it with \eqref{eq:5.5},
$$\overline{f}_0=b_{02}f_2+b_{04}f_4+\cdots+b_{0,2m}f_{2m}.$$
It concludes that $N=n+1$. 
\end{proof} 
Following our claim, when $N=2(n+1)$, $\overline{f}_0, \overline{f}_1,..., \overline{f}_n, f_0, f_1,..., f_n$ is a unit orthogonal frame for $\mathbb{C}^{N}$. In particular, $$\langle f_0, \overline{f}_j \rangle=0, \forall j\in \mathbb{Z}.$$ 
Then similar discussions about the first equation in \eqref{eq:5.4} give 
$$w_{k,m+1+k}=0, \quad k=0,...,m.$$
In this case, $W$ is of the same form as \eqref{eq:4.11} and $f_0$ can be expressed by \eqref{eq:5.2}. 

When $N=n+1$. $f_0$ can be shown as \eqref{eq:4.50} under relation $r_0=\cdots=r_n=\frac{1}{n+1}$,  then
\begin{equation}f_0=\frac{1}{\sqrt{2N}}\left(\begin{array}{ccccccc}
\sqrt{w_{0,m+1}}(e^{z-\overline{z}}+e^{\overline{z}-z})
\\
\sqrt{w_{1,m+2}}(e^{a_1z-\overline{a}_1\overline{z}}+e^{\overline{a}_1\overline{z}-a_1z})
\\
\vdots  
\\
\sqrt{w_{mn}}(e^{a_mz-\overline{a}_m\overline{z}}+e^{\overline{a}_m\overline{z}-a_mz})
\\
\sqrt{-w_{mn}}(e^{a_mz-\overline{a}_m\overline{z}}-e^{\overline{a}_m\overline{z}-a_mz})
\\
\vdots
\\
\sqrt{-w_{1,m+2}}(e^{a_1z-\overline{a}_1\overline{z}}-e^{\overline{a}_1\overline{z}-a_1z})
\\
\sqrt{-w_{0,m+1}}(e^{z-\overline{z}}-e^{\overline{z}-z})
\end{array}\right),\label{eq:5.6}\end{equation}
where $w_{0,m+1}+w_{1,m+2}+\cdots+w_{mn}=0$ and $a_i=e^{\frac{2\pi \sqrt{-1}i}{n+1}}, i=0,1,2,...,n$ are the $n+1$-roots of unity.

 {\bfseries Case III: $\exists i, \forall j  \ s.t. a_i+a_j \neq 0$ and $\exists k, l \ s.t. a_k+a_l=0$.}  
 
 Here we distinguish two subcases for consideration: $n$ is odd, or $n$ is even.
 
 We first briefly discuss the case when $n$ is odd. By a similar discussion as Case II above, we get $N=n+1$ or $N=2(n+1)$. The first one is impossible since there exists some $i_{\beta}$ s.t. $w_{i_{\beta}k}=0$ for $k=0,1,...,n$.
 So $N=2(n+1)$,   $W$ is of the same form as \eqref{eq:4.11} and $f_0$ can be expressed by \eqref{eq:5.2}. 
 
 Next we  consider the case when $n$ is even. Write $n=2m$ and  suppose $a_{i_\alpha}+a_{j_\alpha}=0, \ 0\leq i_\alpha<j_\alpha\leq n$ for $\alpha=1,...,s$ and for $\beta=2s+1,...,n+1, \forall j, a_{i_\beta}+a_{j}\neq 0, 0\leq i_\beta \leq n$. Then except for $w_{i_1j_1}, w_{i_2j_2},..., w_{i_sj_s}$ we have 
 $w_{ij}=0, i, j=0,1,2,...,n$.
 Under these assumptions, we shall show
 \begin{equation}w_{i_1j_1}=w_{i_2j_2}=\cdots=w_{i_sj_s}=0,\label{eq:5.7}\end{equation}
 which means  $W$ is given as \eqref{eq:4.11}. To do this, we firstly need to consider $\langle f_0, \overline{f}_{k} \rangle$.
In fact for any $k\in \mathbb{Z}$
\begin{equation}
\langle f_0, \overline{f}_{k} \rangle=\frac{1+(-1)^k}{n+1}(w_{i_1j_1}a^k_{i_1}+w_{i_2j_2}a^k_{i_2}+\cdots+w_{i_sj_s}a^k_{i_s}).
\label{eq:5.8}\end{equation}
Combining with $f_{n+k}=f_{k-1}$, following facts can be obtained
\\(e) $\langle f_0, \overline{f}_{0} \rangle=\langle f_0, \overline{f}_{n+1} \rangle=0$,
\\(f) $\langle f_0, \overline{f}_{k} \rangle$ is a constant for any $k\in \mathbb{Z}$,
\\(g) $\langle f_0, \overline{f}_{k} \rangle=0$ if $k=1,3,5,...,2m-1$.
\\(h) $\langle f_0, \overline{f}_{n+1+k} \rangle=\langle f_0, \overline{f}_{k} \rangle=0$ for $k=0,1,...,n$.

From these properties, for $k=0,1,...,m$ we get the following useful relation
$$\langle f_0, \overline{f}_{n+2k} \rangle=0,$$
which implies
$$w_{i_1j_1}a^{2k-1}_{i_1}+w_{i_2j_2}a^{2k-1}_{i_2}+\cdots+w_{i_sj_s}a^{2k-1}_{i_s}=0$$ by using of $a^{n+1}_{i_1}=a^{n+1}_{i_2}=\cdots=a^{n+1}_{i_s}=1$. Rewrite it in the following form 
$$\left(\begin{array}{ccccccc}
a_{i_1} & a_{i_2} & \cdots & a_{i_s} \\
a^3_{i_1} & a^3_{i_2} & \cdots & a^3_{i_s}  \\
\vdots & \vdots &  &\vdots  \\
a^{2s-1}_{i_1} & a^{2s-1}_{i_2} & \cdots & a^{2s-1}_{i_s}
\end{array}\right)
\left(\begin{array}{ccccccc}
w_{i_1j_1} \\
w_{i_2j_2} \\
\vdots  \\
w_{i_sj_s}
\end{array}\right)
=\left(\begin{array}{ccccccc}
0 \\
0 \\
\vdots  \\
0
\end{array}\right).$$
Then \eqref{eq:5.7} can be obtained since the determinant of 
coefficient matrix of the above equation is
$$a_{i_1}a_{i_2}\cdots a_{i_s}\prod_{i_q < i_p}(a^2_{i_p}-a^2_{i_q})\neq 0.$$
Then we still have $N=2(n+1)$,   $W$ is of the same form as \eqref{eq:4.11} and $f_0$ can be expressed by \eqref{eq:5.2}.
 
 In summary,  we obtain a classification of Clifford solutions as follows:
 \begin{theo}
 	Let $f: \mathbb{C} \rightarrow Q_{N-2}$ be a linearly full Clifford solution. Then $f$ is congruent to \eqref{eq:5.2} or \eqref{eq:5.6}.
 \end{theo}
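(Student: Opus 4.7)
The plan is to combine Theorem 2 with the two structural rigidities that make a solution Clifford: the pointwise orthonormality $\langle f_i, f_j\rangle = \delta_{ij}$ of the harmonic frame $\{f_0,\ldots,f_n\}$ and its cyclicity $f_{n+1+i}=f_i$. Writing $f_0=UV^{(n)}_0$, the membership $f_0(\mathbb{C})\subseteq Q_{N-2}$ becomes a constraint on $W=U^TU$, and with $r_k=\frac{1}{n+1}$ and $a_k=e^{2\pi\sqrt{-1}k/(n+1)}$ fixed by the Clifford hypothesis, I would branch according to the three cases of Section 4 on the multiset $\{a_i+a_j\}$ and pin down $W$ in each.

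Case I is essentially already done in Section 4: the non-vanishing of every $a_i+a_j$, together with the Vandermonde argument there, forces $N=2(n+1)$ and $f_0$ to take the form \eqref{eq:5.2}. For Case II, which requires $n=2m+1$ odd, the decisive new input is a rank computation for the Gram matrix of $\{\overline{f}_0,\ldots,\overline{f}_n,f_0,\ldots,f_n\}$, which by orthonormality has block form $R=\begin{pmatrix} I & B\\ \overline{B} & I\end{pmatrix}$. Block reduction yields $N=(n+1)+\mathrm{rank}(I-\overline{B}B)$, and the cyclic Clifford structure forces $I-\overline{B}B$ to be circulant with determinant $\prod_k F(a_k)$ over the $(n+1)$-st roots of unity. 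A dichotomy follows: when this determinant is nonzero, $N=2(n+1)$, the orthogonality $\langle f_0,\overline{f}_j\rangle=0$ forces all $w_{k,m+1+k}=0$, and we recover \eqref{eq:5.2}; when it vanishes, pairing the lower bound $|b_{02}|+\cdots+|b_{0,2m}|\ge 1$ (forced by some $F(a_k)=0$) against the upper bound $\le 1$ from $|\overline{f}_0|=1$ collapses $\overline{f}_0$ into $\mathrm{span}\{f_2,f_4,\ldots,f_{2m}\}$, giving $N=n+1$ and form \eqref{eq:5.6}.

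For Case III I would split on the parity of $n$. The odd case reduces to Case II, with the sub-possibility $N=n+1$ ruled out because some $i_\beta$-column of $W$ would vanish, contradicting the unitary-type condition \eqref{eq:4.6}. The even case $n=2m$ is the subtlest: the task is to show all the remaining nonzero entries $w_{i_\alpha j_\alpha}$ (indexed by the $s$ pairs with $a_{i_\alpha}+a_{j_\alpha}=0$) must vanish. I would exploit the orthogonalities $\langle f_0,\overline{f}_{n+2k}\rangle=0$ for $k=0,1,\ldots,m$, reduce exponents modulo $n+1$ via $a_{i_\alpha}^{n+1}=1$, and solve the resulting $s\times s$ linear system, whose coefficient matrix has Vandermonde-type determinant $a_{i_1}\cdots a_{i_s}\prod_{p<q}(a_{i_q}^2-a_{i_p}^2)\neq 0$. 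This forces all $w_{i_\alpha j_\alpha}=0$, reducing $W$ to the shape of Case I, so $f_0$ again has the form \eqref{eq:5.2}. The main obstacle is the borderline case of the rank analysis in Case II: extracting the sharp dichotomy $N\in\{n+1,2(n+1)\}$ from the circulant factorization and then promoting the vanishing of a single $F(a_k)$ into the precise span statement on $\overline{f}_0$ via a two-sided norm estimate; once this is settled, the remaining branches fall out from essentially mechanical linear algebra.
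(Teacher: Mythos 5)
Your proposal is correct and follows essentially the same route as the paper: the same three-case split on the multiset $\{a_i+a_j\}$, the same Gram-matrix block reduction $N=(n+1)+\mathrm{rank}(I-\overline{B}B)$ with the circulant factorization $\det(I-\overline{B}B)=\prod_k F(a_k)$ driving the dichotomy $N\in\{n+1,2(n+1)\}$ in Case II, and the same Vandermonde-type elimination of the $w_{i_\alpha j_\alpha}$ in Case III. No substantive differences to report.
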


\end{document}